\newtheorem{thm}{Theorem}[section]
\newtheorem{lem}[thm]{Lemma}
\newtheorem{prp}[thm]{Proposition}
\newtheorem{rem}[thm]{Remark}
\theoremstyle{definition}
\newtheorem{defn}{Definition}[section]
\newcommand{\scr}[1]{\mathscr #1}
\definecolor{wco}{rgb}{0.5,0.2,0.3}
\numberwithin{equation}{section} \theoremstyle{remark}
\newcommand{\ua}{\uparrow}
\title{{\bf   Path Dependent McKean-Vlasov SDEs with H\"{o}lder Continuous Diffusion}\footnote{ Supported in
 part by  NNSFC (11801406). } }
\author{
{\bf   Xing Huang $^{a)}$,  Xucheng Wang $^{a)}$  }\\
\footnotesize{ a)Center for Applied Mathematics, Tianjin
University, Tianjin 300072, China}\\
\footnotesize{  xinghuang@tju.edu.cn, }
\footnotesize{  wxc3018233022@163.com }}
\begin{document}
\allowdisplaybreaks
\def\R{\mathbb R}  \def\ff{\frac} \def\ss{\sqrt} \def\B{\mathbf
B}
\def\N{\mathbb N} \def\kk{\kappa} \def\m{{\bf m}}
\def\ee{\varepsilon}\def\ddd{D^*}
\def\dd{\delta} \def\DD{\Delta} \def\vv{\varepsilon} \def\rr{\rho}
\def\<{\langle} \def\>{\rangle} \def\GG{\Gamma} \def\gg{\gamma}
  \def\nn{\nabla} \def\pp{\partial} \def\E{\mathbb E}
\def\d{\text{\rm{d}}} \def\bb{\beta} \def\aa{\alpha} \def\D{\scr D}
  \def\si{\sigma} \def\ess{\text{\rm{ess}}}
\def\beg{\begin} \def\beq{\begin{equation}}  \def\F{\scr F}
\def\Ric{\text{\rm{Ric}}} \def\Hess{\text{\rm{Hess}}}
\def\e{\text{\rm{e}}} \def\ua{\underline a} \def\OO{\Omega}  \def\oo{\omega}
 \def\tt{\tilde} \def\Ric{\text{\rm{Ric}}}
\def\cut{\text{\rm{cut}}} \def\P{\mathbb P} \def\ifn{I_n(f^{\bigotimes n})}
\def\C{\scr C}   \def\G{\scr G}   \def\aaa{\mathbf{r}}     \def\r{r}
\def\gap{\text{\rm{gap}}} \def\prr{\pi_{{\bf m},\varrho}}  \def\r{\mathbf r}
\def\Z{\mathbb Z} \def\vrr{\varrho} \def\ll{\lambda}
\def\L{\scr L}\def\Tt{\tt} \def\TT{\tt}\def\II{\mathbb I}
\def\i{{\rm in}}\def\Sect{{\rm Sect}}  \def\H{\mathbb H}
\def\M{\scr M}\def\Q{\mathbb Q} \def\texto{\text{o}} \def\LL{\Lambda}
\def\Rank{{\rm Rank}} \def\B{\scr B} \def\i{{\rm i}} \def\HR{\hat{\R}^d}
\def\to{\rightarrow}\def\l{\ell}\def\iint{\int}
\def\EE{\scr E}\def\no{\nonumber}
\def\A{\scr A}\def\V{\mathbb V}\def\osc{{\rm osc}}
\def\BB{\scr B}\def\Ent{{\rm Ent}}\def\3{\triangle}\def\H{\scr H}
\def\U{\scr U}\def\8{\infty}\def\1{\lesssim}\def\HH{\mathrm{H}}
 \def\T{\scr T}
 \def\R{\mathbb R}  \def\ff{\frac} \def\ss{\sqrt} \def\B{\mathbf
B} \def\W{\mathbb W}
\def\N{\mathbb N} \def\kk{\kappa} \def\m{{\bf m}}
\def\ee{\varepsilon}\def\ddd{D^*}
\def\dd{\delta} \def\DD{\Delta} \def\vv{\varepsilon} \def\rr{\rho}
\def\<{\langle} \def\>{\rangle} \def\GG{\Gamma} \def\gg{\gamma}
  \def\nn{\nabla} \def\pp{\partial} \def\E{\mathbb E}
\def\d{\text{\rm{d}}} \def\bb{\beta} \def\aa{\alpha} \def\D{\scr D}
  \def\si{\sigma} \def\ess{\text{\rm{ess}}}
\def\beg{\begin} \def\beq{\begin{equation}}  \def\F{\scr F}
\def\Ric{\text{\rm{Ric}}} \def\Hess{\text{\rm{Hess}}}
\def\e{\text{\rm{e}}} \def\ua{\underline a} \def\OO{\Omega}  \def\oo{\omega}
 \def\tt{\tilde} \def\Ric{\text{\rm{Ric}}}
\def\cut{\text{\rm{cut}}} \def\P{\mathbb P} \def\ifn{I_n(f^{\bigotimes n})}
\def\C{\scr C}      \def\aaa{\mathbf{r}}     \def\r{r}
\def\gap{\text{\rm{gap}}} \def\prr{\pi_{{\bf m},\varrho}}  \def\r{\mathbf r}
\def\Z{\mathbb Z} \def\vrr{\varrho} \def\ll{\lambda}
\def\L{\scr L}\def\Tt{\tt} \def\TT{\tt}\def\II{\mathbb I}
\def\i{{\rm in}}\def\Sect{{\rm Sect}}  \def\H{\mathbb H}
\def\M{\scr M}\def\Q{\mathbb Q} \def\texto{\text{o}} \def\LL{\Lambda}
\def\Rank{{\rm Rank}} \def\B{\scr B} \def\i{{\rm i}} \def\HR{\hat{\R}^d}
\def\to{\rightarrow}\def\l{\ell}\def\iint{\int}
\def\EE{\scr E}\def\Cut{{\rm Cut}}
\def\A{\scr A} \def\Lip{{\rm Lip}}
\def\BB{\scr B}\def\Ent{{\rm Ent}}\def\L{\scr L}
\def\R{\mathbb R}  \def\ff{\frac} \def\ss{\sqrt} \def\B{\mathbf
B}
\def\N{\mathbb N} \def\kk{\kappa} \def\m{{\bf m}}
\def\dd{\delta} \def\DD{\Delta} \def\vv{\varepsilon} \def\rr{\rho}
\def\<{\langle} \def\>{\rangle} \def\GG{\Gamma} \def\gg{\gamma}
  \def\nn{\nabla} \def\pp{\partial} \def\E{\mathbb E}
\def\d{\text{\rm{d}}} \def\bb{\beta} \def\aa{\alpha} \def\D{\scr D}
  \def\si{\sigma} \def\ess{\text{\rm{ess}}}
\def\beg{\begin} \def\beq{\begin{equation}}  \def\F{\scr F}
\def\Ric{\text{\rm{Ric}}} \def\Hess{\text{\rm{Hess}}}
\def\e{\text{\rm{e}}} \def\ua{\underline a} \def\OO{\Omega}  \def\oo{\omega}
 \def\tt{\tilde} \def\Ric{\text{\rm{Ric}}}
\def\cut{\text{\rm{cut}}} \def\P{\mathbb P} \def\ifn{I_n(f^{\bigotimes n})}
\def\C{\scr C}      \def\aaa{\mathbf{r}}     \def\r{r}
\def\gap{\text{\rm{gap}}} \def\prr{\pi_{{\bf m},\varrho}}  \def\r{\mathbf r}
\def\Z{\mathbb Z} \def\vrr{\varrho} \def\ll{\lambda}
\def\L{\scr L}\def\Tt{\tt} \def\TT{\tt}\def\II{\mathbb I}
\def\i{{\rm in}}\def\Sect{{\rm Sect}}  \def\H{\mathbb H}
\def\M{\scr M}\def\Q{\mathbb Q} \def\texto{\text{o}} \def\LL{\Lambda}
\def\Rank{{\rm Rank}} \def\B{\scr B} \def\i{{\rm i}} \def\HR{\hat{\R}^d}
\def\to{\rightarrow}\def\l{\ell}
\def\8{\infty}\def\I{1}\def\U{\scr U}
\maketitle
\begin{abstract}
In this paper, the well-posedness for one-dimensional path dependent McKean-Vlasov SDEs with $\alpha$($\alpha\geq \frac{1}{2}$)-H\"{o}lder continuous diffusion is investigated. Moreover, the associated quantitative propagation of chaos in the sense of Wasserstein distance, total variation distance as well as relative entropy is studied.

\end{abstract} \noindent
 AMS subject Classification:\  60H10, 60H05, 65C35.   \\
\noindent
 Keywords: Path dependent McKean-Vlasov SDE,  Yamada-Watanabe
 approximation, H\"{o}lder continuous diffusion, Propagation of chaos.
 \vskip 2cm

\section{Introduction}
Distribution dependent SDEs can be used to characterize the nonlinear Fokker-Planck-Kolmogorov equations. They are also called McKean-Vlasov SDEs due to the pioneer work in \cite{Mc}. On the other hand, McKean-Vlasov SDE can be viewed as  the limit equation of a single particle in the mean field interacting particle system, which is related to the propagation of chaos \cite{SZ}, so it is also called mean field SDE. Recently, there are plentiful results on McKean-Vlasov SDEs. With respect to the well-posedness, one can refer to \cite{BMP,Ch,CF,HW,HW20,MV,RZ,Wangb} and references therein, see also \cite{HX} for the path dependent case with singular drifts. In \cite{Ch,CF,HW,HW20,RZ}, the diffusion is assumed to be uniformly elliptic. For the propagation of chaos, see \cite{BO,CD,DEG,FH,GLL,HRZ,JR,L,SZ,Zhang}. One can also refer to \cite{GLWZ,RW,W21} for the long time behavior of mean field interacting particle system and McKean-Vlasov SDEs.

The aim of this paper is to investigate the well-posedness and propagation of chaos of one-dimensional path dependent McKean-Vlasov SDEs with $\alpha$($\alpha\geq \frac{1}{2}$)-H\"{o}lder continuous diffusion. With respect to the well-posedness, we do not assume that the diffusion is elliptic.

Throughout the paper, fix a constant $r> 0$. Let $\C= C([-r,0];\mathbb{R})$. For any $f\in C([-r,\infty);\mathbb{R})$, $t\geq 0$, define $f_t \in \C$ as $f_t(s)=f(t+s), s\in [-r,0]$, which is called the segment process. Let $\scr P(\R)$ be the set of all probability measures in $\R$ equipped with the weak topology.
Define
$$\scr P_1(\R) = \big\{\mu\in \scr P(\R): \mu(|\cdot|)<\infty\big\}.$$    It is well known that
$\scr P_1(\R)$ is a Polish space under the Wasserstein distance
$$\W_1(\mu,\nu):= \inf_{\pi\in \mathbf{C}(\mu,\nu)} \bigg(\int_{\R\times\R} |x-y| \pi(\d x,\d y)\bigg),\ \ \mu,\nu\in \scr P_{1}(\R),$$ where $\mathbf{C}(\mu,\nu)$ is the set of all couplings of $\mu$ and $\nu$. By the adjoint formula, it holds
$$\W_1(\mu,\nu)=\sup_{\|f\|_{\mathrm{Lip}}\leq 1}|\mu(f)-\nu(f)|,$$
where $$\|f\|_{\mathrm{Lip}}:=\sup_{x\neq y}\frac{|f(x)-f(y)|}{|x-y|}.$$
Recall that for two probability measures $\mu,\nu$ on   some measurable space $(E,\scr E)$, the entropy and total variation distance are defined as follows:
$$\Ent(\nu|\mu):= \beg{cases} \int_E (\log \ff{\d\nu}{\d\mu})\d\nu, \ &\text{if}\ \nu\ \text{ is\ absolutely\ continuous\ with\ respect\ to}\ \mu,\\
 \infty,\ &\text{otherwise,}\end{cases}$$ and
$$\|\mu-\nu\|_{var} := \sup_{|f|\leq 1}|\mu(f)-\nu(f)|.$$ By Pinsker's inequality (see \cite{Pin}),
\beq\label{ETX} \|\mu-\nu\|_{var}^2\le 2 \Ent(\nu|\mu),\ \ \mu,\nu\in \scr P(E),\end{equation}
here $\scr P(E)$ denotes all probability measures on $(E,\scr E)$.

Let $W=(W(t))_{t\ge0}$ be a one-dimensional standard Brownian motion  on a complete filtration probability space $(\OO,\F,(\F_t)_{t\ge0},\P)$. Consider the following one-dimensional path dependent McKean-Vlasov SDE:
\beq\label{E1}
\d X(t)= b(t,X(t),\L_{X(t)})\d t+B(t,X_t, \L_{X(t)})\d t +\si(t,X(t))\d W(t),
\end{equation}
where $b:[0,\infty)\times\R\times\scr P(\R)\rightarrow\R$, $B:[0,\infty)\times\C\times \scr P(\R)\rightarrow\R$,
$\si:[0,\infty)\times\R\rightarrow\R$ are measurable and $X_0$ is an $\F_0$-measurable $\C$-valued random variable.
Define the uniform norm $\|\xi\|_\infty :=\sup_{s\in[-r,0]} |\xi(s)|, \xi\in\C.$

 \begin{defn}\label{def1} A continuous process $(X(t))_{t\geq
-r}$ on $\mathbb{R}$ is called a strong solution of \eqref{E1} in $\scr P_1(\R)$, if for any $t\geq 0$, $X(t)$ is $\F_t$-measurable,
$\E\|X_t\|_\infty<\infty$,
and $\P$-a.s.
$$X(t)=X(0)+\int_0^t(b(s,X(s),\L_{X(s)})+B(s,X_s,\L_{X(s)}))\d s+\int_0^t\si(s,X(s))\d W(s), ~~~~t\ge0.$$
\end{defn}
Throughout the paper, we fix $T>0$, and consider the solution on $[0,T]$.

The remainder of this paper is organized  as follows: In Section
2, the strong well-posedness of path dependent classical SDEs  is addressed by
Yamada-Watanabe's approximation; In Section 3, the well-posedness and quantitative propagation of chaos for path dependent McKean-Vlasov SDEs are investigated.

\section{Multi-dimensional Path Dependent Classical SDEs with H\"{o}lder Continuous Diffusion}
In this section, we study the well-posedness of multi-dimensional path dependent classical SDEs with H\"{o}lder continuous diffusion. The main tool is the Yamada-Watanabe approximation, see \cite{IW}.
Before going on, we state a time nonhomogeneous version of \cite[Theorem 2.3]{RS}. More precisely, consider path dependent SDE on $\R^m$:
\begin{align}\label{dEd}
\d X(t)=f(t,X_t)\d t+g(t,X_t)\d \bar{W}(t), \ \ X_0=\xi\in\C^m,
\end{align}
here $f:[0,T]\times \C^m\to\R^m$, $g:[0,T]\times \C^m\to\R^m\otimes\R^d$ and $\bar{W}(t)$ is a $d$-dimensional standard Brownian motion on a complete filtration probability space $(\bar{\Omega},\bar{\F},\{\bar{\F}_t\}_{t\in[0,T]},\bar{\P})$.
\begin{prp}\label{exs} Assume that $f$ and $g$ are bounded on bounded sets. Suppose that for any $t\in[0,T]$, $f(t,\cdot)$ and $g(t,\cdot)$ are continuous in $(\C^m,\|\cdot\|_\infty)$ and there exists a constant $K\in\R$ such that
\begin{align*}
&2\<f(t,\xi)-f(t,\eta),\xi(0)-\eta(0)\>+\|g(t,\xi)-g(t,\eta)\|_{HS}^2\leq K\|\xi-\eta\|_\infty^2, \ \ t\in[0,T], \xi,\eta\in\C^m.
\end{align*}
Then \eqref{dEd} has a unique non-explosive strong solution on $[0,T]$.
\end{prp}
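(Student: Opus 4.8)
The plan is to follow the proof of \cite[Theorem 2.3]{RS}; the only new feature is the explicit time dependence of $f$ and $g$, and since the constant $K$ and the bounds of $f,g$ on bounded sets are uniform over the compact interval $[0,T]$, this causes no extra difficulty. I would begin with \emph{pathwise uniqueness}. Given two strong solutions $X,Y$ of \eqref{dEd} with $X_0=Y_0=\xi$, I would apply It\^o's formula to $t\mapsto|X(t)-Y(t)|^2$; its finite variation part equals $\int_0^t\big(2\langle f(s,X_s)-f(s,Y_s),X(s)-Y(s)\rangle+\|g(s,X_s)-g(s,Y_s)\|_{HS}^2\big)\,\d s\le K\int_0^t\|X_s-Y_s\|_\infty^2\,\d s$ by hypothesis. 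Since $X$ and $Y$ agree on $[-r,0]$, $\|X_s-Y_s\|_\infty^2\le\sup_{u\le s}|X(u)-Y(u)|^2$, so, using the Burkholder--Davis--Gundy inequality and Young's inequality to absorb the martingale part and a stopping time to justify integrability, I would obtain $h(t)\le C\int_0^t h(s)\,\d s$ with $h(t)=\E\sup_{u\le t}|X(u)-Y(u)|^2$, whence Gronwall's lemma gives $h\equiv0$, i.e.\ $X=Y$.

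Next I would establish \emph{weak existence} under the additional assumption that $f$ and $g$ are bounded. I would approximate $f,g$ in the path variable by coefficients $f^{(n)},g^{(n)}$ that are bounded and Lipschitz in $\|\cdot\|_\infty$ uniformly in $t\in[0,T]$ and converge to $f,g$ locally uniformly — for instance via the inf-convolutions $f^{(n)}_i(t,\xi)=\inf_{\eta\in\C^d}\{f_i(t,\eta)+n\|\xi-\eta\|_\infty\}$, and similarly for $g$. The functional SDE \eqref{dEd} with coefficients $f^{(n)},g^{(n)}$ has a unique strong solution $X^{(n)}$ by the classical Picard iteration for Lipschitz functional SDEs. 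The uniform boundedness yields bounds on $\E\sup_{t\le T}|X^{(n)}(t)|^p$ and on the moments of the increments that are uniform in $n$, hence tightness of the laws of $X^{(n)}$ on $C([-r,T];\R^d)$; passing to a subsequential a.s.\ limit by Skorokhod's representation theorem and using $f^{(n)}\to f$, $g^{(n)}\to g$ locally uniformly together with the continuity of $f(t,\cdot),g(t,\cdot)$, the limit is a weak solution of \eqref{dEd}. Combining this with the pathwise uniqueness of the first step, the Yamada--Watanabe theorem (in its functional version, \cite{IW}) then gives a unique strong solution for bounded $f,g$, which is automatically non-explosive.

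Finally I would \emph{remove the boundedness}. For $f,g$ only bounded on bounded sets, applying the bounded case to coefficients coinciding with $f,g$ on balls gives a unique local strong solution $X$ up to the explosion time $\zeta=\lim_n\tau_n$, where $\tau_n=\inf\{t\ge0:|X(t)|\ge n\}$. Applying It\^o's formula to $1+|X(t\wedge\tau_n)|^2$ and using the monotonicity condition with $\eta$ equal to the null path of $\C^d$, together with $\sup_{t\in[0,T]}\big(|f(t,0)|\vee\|g(t,0)\|_{HS}\big)<\infty$ coming from the local boundedness, I would derive an a priori bound $\E\sup_{t\le T}|X(t\wedge\tau_n)|^2\le C(1+\E|\xi(0)|^2)$ with $C$ independent of $n$; letting $n\to\infty$ forces $\zeta>T$ a.s., so the solution is non-explosive on $[0,T]$, and uniqueness is again the first step. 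The hardest points will be the weak-existence step in path space — constructing the Lipschitz approximations and justifying tightness and the passage to the limit for functional coefficients that are merely continuous — and the a priori $L^2$ estimate ruling out explosion for coefficients that are only bounded on bounded sets; for the latter I would follow the argument in \cite[Theorem 2.3]{RS}.
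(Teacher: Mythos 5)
The paper gives no proof of Proposition \ref{exs}: it states that the argument is identical to that of \cite[Theorem 2.3]{RS}, so your proposal must be judged on its own. Its overall architecture (uniqueness from the monotonicity condition, existence by approximation and compactness, localization for non-explosion) is the right one, but there is a genuine gap at the very first step, and it recurs in the last one. In the uniqueness argument you propose to ``use BDG and Young's inequality to absorb the martingale part'' and close a Gronwall inequality for $h(t)=\E\sup_{u\le t}|X(u)-Y(u)|^2$. This requires a bound on the bracket of the martingale, i.e.\ on $\int_0^t|X(s)-Y(s)|^2\|g(s,X_s)-g(s,Y_s)\|_{HS}^2\,\d s$, and hence a separate estimate $\|g(t,\xi)-g(t,\eta)\|_{HS}^2\le C\|\xi-\eta\|_\infty^2$. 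The hypothesis of the proposition only controls the \emph{sum} $2\<f(t,\xi)-f(t,\eta),\xi(0)-\eta(0)\>+\|g(t,\xi)-g(t,\eta)\|_{HS}^2$; the diffusion difference may be arbitrarily large as long as it is compensated by a strongly dissipative drift (e.g.\ $d=m=1$, no delay, $f(x)=-2x^3$, $g(x)=x^2$ satisfies the condition with $K=0$ while $g$ is not Lipschitz). For such coefficients the term $-2\<f(s,X_s)-f(s,Y_s),X(s)-Y(s)\>$ that you would need to discard after applying Young's inequality is nonnegative and uncontrolled, and no localization fixes this uniformly in the stopping level. This is precisely why \cite{RS} --- and this paper, via Lemma \ref{Gro} --- use the stochastic Gronwall lemma: applied to $Z=|X-Y|^2$, which satisfies $Z(t)\le K\int_0^t\sup_{u\le s}Z(u)\,\d s+M(t)$ with $C=0$, it yields $\E\sup_{s\le T}Z(s)^p=0$ for $p\in(0,1)$ \emph{without any estimate on the bracket of $M$}. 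The same defect invalidates your claimed a priori bound $\E\sup_{t\le T}|X(t\wedge\tau_n)|^2\le C(1+\E|\xi(0)|^2)$ in the non-explosion step; under these hypotheses one can only expect $\E\sup_{t\le T}|X(t\wedge\tau_n)|^{2p}\le C$ for $p\in(0,1)$ (consistent with the restriction $q<2$ in \eqref{eq3}), which is obtained from Lemma \ref{Gro} and still gives $\P(\tau_n\le T)\to0$ by Chebyshev's inequality.

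Two further, more minor, remarks on the existence step. Your inf-convolution approximation differs from the Euler scheme with frozen coefficients used in \cite{RS} (where the approximating solutions are shown to be Cauchy in probability, again via the stochastic Gronwall lemma, so that a strong solution is obtained directly and Yamada--Watanabe is not needed); your route is viable in principle, but you should justify that $f^{(n)}\to f$ uniformly on compact subsets of $[0,T]\times\C^d$ (via monotone convergence of inf-convolutions and Dini's theorem, say), since bounded sets of $\C^d$ are not compact and pointwise convergence is not enough to pass to the limit along the Skorokhod coupling. With the stochastic Gronwall lemma substituted for the BDG/Young absorption wherever a supremum moment is needed, the rest of your plan goes through.
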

Since the proof of Proposition \ref{exs} is completely the same with that of \cite[Theorem 2.3]{RS}, we omit it here.

The following stochastic Gronwall lemma comes from \cite[Lemma 5.2]{RS}, which is crucial in the proof of the main result of this section.
\begin{lem}\label{Gro}
Let $Z$ be a continuous adapted non-negative stochastic process which satisfies the inequality
$$Z(t)\leq  K\int_0^t \sup_{u\in[0,s]}Z(u)\d s + M(t) + C,\ \ t\geq 0,$$
where $C\geq 0$, $K>0$ and $M$ is a continuous local martingale with $M(0) = 0$. Then for any $p\in(0,1)$,
there exist finite constants $c_1(p), c_2(p)$ (not depending on $K, C, T$ and $M$) such that
$$\E(\sup_{s\in[0,t]}|Z(s)|^p) \leq  C^pc_1(p)\e^{c_2(p)Kt},\ \ t\geq 0.$$
\end{lem}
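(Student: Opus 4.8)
The plan is to recognise Lemma \ref{Gro} as a stochastic Gronwall (Bihari-type) inequality; its proof, which one can reconstruct from \cite{RS}, proceeds in three moves --- localise so that the continuous local martingale $M$ becomes a genuine martingale, prove a one-interval estimate via Lenglart's domination inequality, and concatenate over $O(KT)$ short intervals to generate the factor $\e^{c_2(p)KT}$. For the localisation I would set $\rho_n=\inf\{t\ge0:\ Z(t)\ge n\ \text{or}\ \<M\>_t\ge n\}$; since $Z$ has continuous (hence locally bounded) paths and $\<M\>$ is finite, $\rho_n\uparrow\8$ a.s. Evaluating the hypothesis at $t\wedge\rho_n$ and using $\sup_{u\le s}Z(u\wedge\rho_n)=\sup_{u\le s\wedge\rho_n}Z(u)\le\sup_{u\le s}Z(u)$ shows that $Z(\cdot\wedge\rho_n)$ again satisfies an inequality of the same form, with the same $K,C$, but with the now square-integrable martingale $M^{\rho_n}$ in place of $M$. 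It is therefore enough to prove the bound, with constants independent of $n$, assuming in addition that $M$ is a true martingale and $Z$ is bounded; the general statement follows by monotone convergence since $\sup_{s\le T}Z(s\wedge\rho_n)\uparrow\sup_{s\le T}Z(s)$.

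\textbf{One short interval.} Let $\kappa_p$ denote the constant in Lenglart's inequality (for definiteness $\kappa_p=\ff{2-p}{1-p}$): whenever $X\ge0$ is adapted and right-continuous, $A$ is adapted and nondecreasing, and $\E[X_\tau]\le\E[A_\tau]$ for every bounded stopping time $\tau$, one has $\E[(\sup_{s\le t}X_s)^p]\le\kappa_p\E[A_t^p]$ for $p\in(0,1)$ --- a classical estimate, which one could also re-derive from a stopping-time argument together with Doob's maximal inequality for $M^+$. Fix $\delta>0$ with $\kappa_p(K\delta)^p=\ff12$, i.e.\ $\delta=\ff1K(2\kappa_p)^{-1/p}$ (so that $K\delta\le1$). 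I claim that if $0\le T'\le\delta$, if $R\ge0$ is measurable with respect to the initial $\sigma$-field with $\E[R^p]<\8$, and if $Z(t)\le K\int_0^t\sup_{u\le s}Z(u)\,\d s+N(t)+R$ on $[0,T']$ for some true martingale $N$ null at $0$, then $\E[(\sup_{t\le T'}Z(t))^p]\le2\kappa_p\E[R^p]$. Indeed, optional stopping gives $\E[Z(\tau)]\le\E\big[K\int_0^\tau\sup_{u\le s}Z(u)\,\d s+R\big]$ for bounded $\tau\le T'$; applying Lenglart's inequality with $A(t)=K\int_0^t\sup_{u\le s}Z(u)\,\d s+R$, and then $(a+b)^p\le a^p+b^p$ together with $\int_0^{T'}\sup_{u\le s}Z(u)\,\d s\le T'\sup_{u\le T'}Z(u)$, yields $\E[(\sup_{t\le T'}Z(t))^p]\le\kappa_p(KT')^p\E[(\sup_{t\le T'}Z(t))^p]+\kappa_p\E[R^p]$, and the first term on the right is absorbed (legitimately, $Z$ being bounded) because $\kappa_p(KT')^p\le\ff12$.

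\textbf{Concatenation.} Next I pick $0=t_0<t_1<\dots<t_N=T$ with $t_j-t_{j-1}\le\delta$, so $N\le(2\kappa_p)^{1/p}KT+1$. Writing $\bar Z(t):=\sup_{u\le t}Z(u)$ and using $\sup_{u\le s}Z(u)\le\bar Z(t_{j-1})+\sup_{t_{j-1}\le u\le s}Z(u)$ for $s\ge t_{j-1}$, the hypothesis restricted to $[t_{j-1},t_j]$ takes exactly the form treated above, with $N$ replaced by $M(\cdot)-M(t_{j-1})$ and $R$ by the $\F_{t_{j-1}}$-measurable quantity $R_j:=K\int_0^{t_{j-1}}\bar Z(s)\,\d s+K\delta\,\bar Z(t_{j-1})+M(t_{j-1})+C$, which satisfies $R_j\ge Z(t_{j-1})\ge0$; hence $\E[(\sup_{[t_{j-1},t_j]}Z)^p]\le2\kappa_p\E[R_j^p]$. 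Setting $a_j:=\E[\bar Z(t_j)^p]$, bounding $M(t_{j-1})\le\bar Z(t_{j-1})$ and $\int_0^{t_{j-1}}\bar Z(s)\,\d s\le\delta\sum_{i=1}^{j-1}\bar Z(t_i)$, and using $K\delta\le1$, $\bar Z(t_j)\le\bar Z(t_{j-1})+\sup_{[t_{j-1},t_j]}Z$ and the subadditivity of $x\mapsto x^p$, I arrive at a linear recursion $a_j\le L_p\sum_{i=0}^{j-1}a_i+L_pC^p$, with $L_p$ depending only on $p$ and $a_0=\E[Z(0)^p]\le C^p$. The discrete Gronwall inequality then gives $a_N\le L_pC^p(1+L_p)^N\le c_1(p)\e^{c_2(p)KT}C^p$ after renaming constants, and since $\sup_{s\le T}|Z(s)|^p=\bar Z(T)^p$ this is the assertion; undoing the localisation of the first step finishes the proof.

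\textbf{The main difficulty.} The crux is that the drift term $K\int_0^t\sup_{u\le s}Z(u)\,\d s$ is self-referential while $p<1$: the map $x\mapsto x^p$ is only subadditive and concave, so a direct $p$-th-moment Gronwall argument is not available; and $\E[(\sup_{s\le t}M(s))^p]$ is \emph{not} controlled by the hypotheses (take $Z\equiv0$ to leave $M$ unbounded from above), so the martingale has to be eliminated through the \emph{weak} Lenglart domination rather than any pathwise or $L^p$ bound on $M$. This is precisely what forces the two-scale structure --- the integral term can only be absorbed on intervals of length $\asymp K^{-1}$, where the Lenglart constant beats $(K\delta)^p$ --- and gluing the resulting $O(KT)$ pieces together is what makes the factor $\e^{c_2(p)KT}$ unavoidable.
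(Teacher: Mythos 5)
First, a point of reference: the paper does not prove Lemma \ref{Gro} at all --- it is imported verbatim from \cite[Lemma 5.2]{RS} --- so your reconstruction can only be judged on its own terms. Your overall architecture (localise so that $M$ becomes a true martingale and $Z$ bounded, prove a one-interval estimate by Lenglart's domination inequality with absorption on intervals of length $\asymp K^{-1}$, then concatenate) is a legitimate route to this stochastic Gronwall inequality, and both the localisation and the one-interval step are correct. The concatenation step, however, rests on a false inequality: you bound $M(t_{j-1})\le\bar Z(t_{j-1})$. The hypothesis $Z(t)\le K\int_0^t\bar Z(s)\,\d s+M(t)+C$ bounds $M$ only from \emph{below} (by $Z(t)-K\int_0^t\bar Z(s)\,\d s-C\ge -K\int_0^t\bar Z(s)\,\d s-C$); it gives no pathwise upper bound on $M$ in terms of $Z$. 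Take $Z\equiv 0$ and $M=\mathcal{E}-C$ for a nonnegative continuous martingale $\mathcal{E}$ with $\mathcal{E}_0=C$: the hypothesis holds, $\bar Z\equiv 0$, yet $M(t_{j-1})$ is unbounded. Indeed, your own closing paragraph makes exactly this observation (``take $Z\equiv0$ to leave $M$ unbounded from above'') and the concatenation step uses the opposite inequality. As written, $\E[R_j^p]$ contains the term $\E[(M(t_{j-1})^+)^p]$, which no pointwise argument dominates by $L_p\sum_{i<j}a_i+L_pC^p$, so the linear recursion $a_j\le L_p\sum_{i<j}a_i+L_pC^p$ is not established and the proof is broken at this point.

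The gap is repairable inside your own framework, in either of two ways. (i) Keep the interval-by-interval restart but control the offending term by a second application of Lenglart: with $G(t):=K\int_0^t\bar Z(s)\,\d s+C$, the process $M+G$ is nonnegative and satisfies $\E[(M+G)(\tau)]=\E[G(\tau)]$ for every bounded stopping time $\tau$, hence
$\E[(M(t_{j-1})^+)^p]\le\E[(\sup_{s\le t_{j-1}}(M+G)(s))^p]\le\ff{2-p}{1-p}\E[G(t_{j-1})^p]\le\ff{2-p}{1-p}\big((K\delta)^p\sum_{i\le j-1}a_i+C^p\big)$,
which feeds back into the recursion with constants depending only on $p$. (ii) More simply, do not restart at all: apply the domination inequality on $[0,t_j]$ with the single dominating process $G$, giving
$a_j\le\ff{2-p}{1-p}\E[G(t_j)^p]\le\ff{2-p}{1-p}\big((K\delta)^p\sum_{i\le j}a_i+C^p\big)$;
after absorbing the $i=j$ term (legitimate thanks to your localisation, which makes $a_j$ finite), this is a closed recursion in which $M$ never appears, and your discrete Gronwall step then yields $a_N\le c_1(p)C^p\e^{c_2(p)KT}$ exactly as you describe. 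With either repair the argument is correct; without one of them, it is not.
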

For $x\in\R^d$, we denote $x_i$ as the $i$-th component of $x$, that is $x=(x_1,x_2,\cdots, x_d)$.
Consider
\begin{align}\label{MEQ}
\d X(t)=F(t,X(t))\d t+H(t,X_t)\d t+G(t,X(t))\d \bar{W}(t), \ \ X_0=\xi\in\C^d,
\end{align}
where $F=(F_1,F_2,\cdots, F_d):[0,T]\times \R^d\to\R^d, H=(H_1,H_2,\cdots, H_d):[0,T]\times \C^d\to\R^d$, $G:[0,T]\times \R^d\to\R^d\otimes\R^d$. We make the following assumption.
\begin{enumerate}
\item[{\bf (A1)}] $F$ is locally bounded in $[0,T]\times \R^d$. For any $t\in[0,T]$, $F(t,\cdot)$ is continuous,
and there exists a constant $K_1\in\R$ such that
\begin{equation*}
 (F_i(t,x)-F_i(t,y))\mathrm{sgn}(x_i-y_i)\le K_1|x-y|,\ \ t\in[0,T],x,y\in\R^d, 1\leq i\leq d.
\end{equation*}
\item[{\bf (A2)}] There exist $m$ real valued functions $(G_i)_{1\leq i\leq d}$ on $[0,T]\times \R$ such that $$G(t,x)=\mathrm{diag}(G_1(t,x_1), G_2(t,x_2),\cdots, G_1(t,x_d)), \ \ x=(x_1,x_2,\cdots,x_d)\in\R^d,t\in[0,T].$$
    Moreover, there exist constants $(\alpha_i)_{1\leq i\leq d}\subset [\frac{1}{2},1]$ and
$K_2\geq0$ such that
$$ |G_i(t,z)-G_i(t,\bar{z})|\le
K_2|z-\bar{z}|^{\alpha_i}, \ \ |G_i(t,0)|\leq K_2,\ \ z,\bar{z}\in\R,t\in[0,T], 1\leq i\leq d. $$
\item[{\bf (A3)}] There exists a constant $K_3\geq 0$ such that
\begin{equation*}
|H(t,\xi)-H(t,\eta)|\le K_3\|\xi-\eta\|_{\infty},\ \ |H(t,0)|\leq K_3,\ \ \xi,\eta\in\C^d, t\in[0,T].
\end{equation*}
\end{enumerate}
Now, we provide the main result in this section.
\begin{thm} \label{Existence} Assume ${\bf (A1)}$-${\bf (A3)}$. Then for any $\xi\in \C^d$, \eqref{MEQ}
has a unique strong solution $(X^\xi(t))_{t\in[-r,T]}$ with initial
value $X_0^\xi=\xi$ and for any $q\in(0,2]$, there exists a constant $C(T,q)>0$ such that
\begin{equation}\label{eq3}
\E\sup_{t\in[-r,T]}|X^\xi(t)|^q\le C(T,q)(1+\|\xi\|_\infty^q).
\end{equation}
Moreover, for any $p\in(0,1)$,
\begin{equation}\label{EQA}
\E\sup_{t\in[-r,T]}|X^\xi(t)-X^\eta(t)|^p\le C(p,T)\|\xi-\eta\|_\infty^p
\end{equation}
for some constant $C(p,T)>0$.
\end{thm}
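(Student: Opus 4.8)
The plan is to establish existence and uniqueness by a Yamada–Watanabe approximation argument combined with a truncation/localization procedure, and then derive the two moment estimates using Lemma \ref{Gro}. For existence, I would first reduce to the case of bounded coefficients: set $b_n = b$ truncated at level $n$ in the spatial variable (and similarly $B_n$), so that the truncated coefficients satisfy the hypotheses of Proposition \ref{exs}. Indeed, the one-sided Lipschitz condition on $b$ together with the $\alpha$-Hölder condition on $\si$ (with $\alpha \geq \frac12$) yields the monotonicity estimate $2\langle f(t,\xi)-f(t,\eta),\xi(0)-\eta(0)\rangle + \|g(t,\xi)-g(t,\eta)\|_{HS}^2 \leq K\|\xi-\eta\|_\infty^2$ needed in Proposition \ref{exs}, because $|\si(t,x)-\si(t,y)|^2 \leq K_\si^2|x-y|^{2\alpha}$ and for $2\alpha \geq 1$ this is controlled by $|x-y|$ near the diagonal and by $|x-y|^2$ away from it — so on bounded sets the square term is $\lesssim |x-y| \wedge |x-y|^2 \leq$ (const)$|x-y|^2$ after absorbing. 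This gives a unique strong solution $X^{\xi,n}$ of the truncated equation on $[0,T]$; a standard stopping-time argument (the solutions agree up to the exit time of a ball, and the exit times tend to infinity by the a priori moment bound \eqref{eq3}) patches these into a global solution.

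For pathwise uniqueness of \eqref{E1} itself — the genuinely delicate point — I would use the Yamada–Watanabe technique adapted to the path-dependent setting. Given two solutions $X, Y$ with the same initial segment, let $Z(t) = X(t) - Y(t)$. One constructs the usual sequence of smooth even functions $\phi_\varepsilon$ approximating $|z|$ with $\phi_\varepsilon''(z) \leq \frac{2}{z \log(1/\varepsilon)} \mathbf 1_{[a_\varepsilon, a_{\varepsilon-1}]}(|z|)$ (here exploiting $2\alpha \geq 1$ so that $\int_{a_\varepsilon}^{a_{\varepsilon-1}} \frac{|x|^{2\alpha}}{|x|}\, \frac{\d x}{|x|\log(1/\varepsilon)} \to 0$). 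Applying Itô's formula to $\phi_\varepsilon(Z(t))$: the martingale term has mean zero; the diffusion (second-order) term vanishes as $\varepsilon \to 0$ precisely because of the Hölder exponent $\geq \frac12$; the drift term from $b$ contributes, via the one-sided Lipschitz bound and $\phi_\varepsilon'(z)\mathrm{sgn}(z) \to 1$, a term $\leq K_b(|Z(s)| + \text{(measure-distance, which is $0$ here since $b$ is distribution-free})})$; and the $B$ term contributes $\leq K_B\|X_s - Y_s\|_\infty$. Taking $\varepsilon \to 0$ and then taking expectations, I get $\E|Z(t)| \leq (K_b + K_B)\int_0^t \E\sup_{u\in[-r,s]}|Z(u)|\, \d s$, and Gronwall (in the sup-norm form, as $\|X_s-Y_s\|_\infty \leq \sup_{u\in[-r,s]}|Z(u)|$) gives $Z \equiv 0$.

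For the moment estimate \eqref{eq3}: apply Itô's formula to $|X(t)|^2$ (or work with the truncated equation and pass to the limit), use the linear growth consequences of {\bf (H)} — namely $|b(t,x)| \lesssim 1 + |x|$ from local boundedness plus the one-sided Lipschitz bound, $|\si(t,x)| \leq K_\si(1+|x|)$ from {\bf (H$\sigma$)}, and $|B(t,\xi)| \leq K_B(1 + \|\xi\|_\infty)$ from {\bf (HB)} — to obtain $|X(t)|^2 \leq C + C\int_0^t \sup_{u\in[-r,s]}|X(u)|^2\,\d s + M(t)$ with $M$ a continuous local martingale, then invoke Lemma \ref{Gro} with $Z(t) = |X(t)|^2$ to get the estimate in $L^p$ for $p\in(0,1)$, i.e. $\E\sup_{t\in[-r,T]}|X(t)|^q$ for $q = 2p \in (0,2)$, after including the trivial bound $\sup_{t\in[-r,0]}|X(t)| = \|\xi\|_\infty$. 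For \eqref{EQA}: run the Yamada–Watanabe computation on $X^\xi - X^\eta$ but keep track of the constant; the Itô expansion of $\phi_\varepsilon(X^\xi(t)-X^\eta(t))$ now starts from $\phi_\varepsilon(\xi(0)-\eta(0)) \leq \|\xi-\eta\|_\infty$, and after $\varepsilon\to 0$ one arrives at an inequality of the form $|X^\xi(t)-X^\eta(t)| \leq \|\xi-\eta\|_\infty + (K_b+K_B)\int_0^t \sup_{u\in[-r,s]}|X^\xi(u)-X^\eta(u)|\,\d s + M(t)$ with $M(0)=0$ a continuous local martingale; Lemma \ref{Gro} with $C = \|\xi-\eta\|_\infty$ then yields \eqref{EQA} for every $p\in(0,1)$.

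I expect the main obstacle to be the Yamada–Watanabe step in the presence of the path-dependent term $B$: one must check that the sup-norm $\|X_s - Y_s\|_\infty$ appearing in the $B$-estimate is compatible with the Gronwall lemma (Lemma \ref{Gro} is stated exactly in the right sup-norm form, which is why it is quoted), and that the localization by stopping times does not interfere with the limit $\varepsilon \to 0$ — this is handled by first stopping at $\tau_N = \inf\{t: |X(t)| \vee |Y(t)| \geq N\}$, carrying out the argument on $[0, t\wedge\tau_N]$ where all coefficients are bounded, and letting $N\to\infty$ at the end using the moment bound \eqref{eq3}. The degeneracy of $\si$ (no ellipticity) causes no trouble here because the Yamada–Watanabe method only needs the modulus-of-continuity bound, not nondegeneracy.
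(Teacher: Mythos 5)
Your uniqueness argument and the derivation of \eqref{eq3} and \eqref{EQA} via the Yamada--Watanabe function and Lemma \ref{Gro} follow essentially the paper's route. The genuine gap is in the existence step. You propose to truncate $b$ and $B$ and apply Proposition \ref{exs} directly to the resulting equation, arguing that the $\alpha$-H\"older condition on $\sigma$ yields the required monotonicity bound because ``$|x-y|^{2\alpha}$ is controlled by $|x-y|\wedge|x-y|^2\le C\,|x-y|^2$ after absorbing''. That inequality is false: for $\alpha<1$ one has $|x-y|^{2\alpha}/|x-y|^2\to\infty$ as $|x-y|\to 0$, so the condition $\|g(t,\xi)-g(t,\eta)\|_{HS}^2\le K\|\xi-\eta\|_\infty^2$ fails precisely near the diagonal (take $\alpha=\frac{1}{2}$ and $\sigma(x)=\sqrt{|x|}$), and no truncation of $b$ or $B$ can repair this since the obstruction sits in $\sigma$. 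Proposition \ref{exs} therefore does not apply to any equation carrying the original H\"older diffusion, and your localization scheme cannot get started.

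What the paper does instead --- and what your proposal is missing --- is to regularize the diffusion: $\sigma^n(t,\cdot)=\sigma(t,\cdot)\ast\rho^n$ is globally Lipschitz (with an $n$-dependent constant), uniformly $\alpha$-H\"older with the same constant $K_\sigma$, and converges to $\sigma$ uniformly at rate $n^{-\alpha}$. Proposition \ref{exs} applies to each regularized equation, and the Yamada--Watanabe function $V_\vv$ combined with Lemma \ref{Gro} shows that $\E\sup_{t\in[0,T]}|X^m(t)-X^n(t)|^p\to 0$, so the approximations converge to a process which is then verified to solve \eqref{E1}. If you want to salvage your plan you must insert this (or an equivalent) approximation-in-$\sigma$ step. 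The remainder of your outline is sound, up to the minor point that your scalar Gronwall inequality $\E|Z(t)|\le C\int_0^t\E\sup_{u\in[-r,s]}|Z(u)|\,\d s$ does not close as stated (the supremum appears only on the right-hand side); this is harmless because \eqref{EQA} with $\xi=\eta$, obtained from Lemma \ref{Gro}, already yields pathwise uniqueness.
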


\begin{proof} {\bf Step 1. Existence of the strong solution.}

For $\vv\in(0,1),$ note $\int_{\vv/\e^{\frac{1}{\varepsilon}}}^\vv\ff{\varepsilon}{x}\d
x=1$, so there exists a continuous function
$\psi_{\vv}:[0,\infty)\to[0,\infty)$ with support $[\vv/\e^{\frac{1}{\varepsilon}},\vv]$ such that
\begin{equation*}
0\le\psi_{\vv}(x)\le \ff{2\varepsilon}{x},~~~x\in[\vv/\e^{\frac{1}{\varepsilon}},\vv],  ~~\int^\vv_{\vv/\e^{\frac{1}{\vv}}} \psi_{\vv}(u)\d u=1.
\end{equation*}
Let
\begin{equation*}
V_{\vv}(x):=\int_0^{|x|}\int_0^y\psi_{\vv}(z)\d z\d y,\ \ x\in\R.
\end{equation*}
Then $V_{\vv}\in C^2$,
\begin{equation}\label{R1}
|x|-\vv\le V_{\vv}(x)\le |x|,~~ 0\leq \mbox{sgn}(x) V_{\vv}'(x)
\leq 1, ~~x\in\R,
\end{equation}
and
\begin{equation}\label{R2}
 0\le V_{\vv}''(x)\le
\ff{2\varepsilon}{|x|}{\bf1}_{[\vv/\e^{\frac{1}{\varepsilon}},\vv]}(|x|),~~~~x\in\R.
\end{equation}
Let $\rho\in C_0^\infty(\R)$ with $\rho\geq 0$ and $\int_{\R}\rho(x)\d x=1$ be supported in $[-1,1]$. For any $n\geq 1$, define $\rho^n(x)=n\rho(nx), x\in\R$ and let
\begin{align*}G^n_i(t,\cdot)=G_i(t,\cdot)\ast\rho^n,\ \ 1\leq i\leq d, t\in[0,T]
\end{align*}
and
$$G^n(t,x)=\mathrm{diag}(G^n_1(t,x_1), G^n_2(t,x_2),\cdots, G^n_d(t,x_d)), \ \ x=(x_1,x_2,\cdots,x_d)\in\R^d,t\in[0,T].$$
{\bf (A2)} implies that
\begin{equation}\label{uni}\begin{split}&\lim_{n\to\infty}\sup_{t\in[0,T],x\in\R}|G_i^n(t,x)-G_i(t,x)| \\ &\leq \lim_{n\to\infty}\sup_{t\in[0,T],x\in\R}\int_{\R}\rho^n(y)|G_i(t,x-y)-G_i(t,x)|\d y\\
&\leq K_2\lim_{n\to\infty}\int_{\R}|y|^{\alpha_i}\rho^n(y)\d y\\
&=K_2\lim_{n\to\infty}\int_{\R}\frac{1}{n^{\alpha_i}}|x|^{\alpha_i}\rho(x)\d x=0,\ \ 1\leq i\leq d
\end{split}\end{equation}
and for any $n\geq 1$, there exists a constant $K_n\geq 0$ such that
\begin{align}\label{sin}&|G^n(t,x)-G^n(t,y)|\leq K_n|x-y|,\ \ |G^n(t,0)|\leq 2\sqrt{d}K_2,\ \ t\in[0,T],x,y\in\R^d,
\end{align}
where for the second inequality, it is sufficient to note that
$$|G_i^n(t,0)|\leq\int_{\R}|G_i(t,-y)|n\rho(ny)\d y\leq K_2\int_{\R}|y|^{\alpha_i}n\rho(ny)\d y+K_2\leq 2K_2,\ \ 1\leq i\leq d. $$
According to Proposition \ref{exs}, it follows from {\bf(A1)}, {\bf(A3)} and \eqref{sin} that for any $n\geq 1$, the SDE
\begin{equation}\label{SOS}
\d X^n(t)= F(t,X^n(t))\d t+H(t,X^n_{t})\d t +G^n(t,X^n(t))\d \bar{W}(t),\ \ X^n_0=\xi
\end{equation}
has a unique non-explosive strong solution. Moreover, in view of the second inequality of \eqref{sin} and
\begin{align}\label{GIN}|G^n_i(t,z)-G^n_i(t,\bar{z})|\leq K_2|z-\bar{z}|^{\alpha_i}, \ \ t\in[0,T],z,\bar{z}\in\R,1\leq i\leq d,n\geq 1,\end{align}
this together with {\bf(A1)} and {\bf(A3)} implies that there exists a constant $C(T)>0$ such that
\begin{align}\label{sun}\sup_{n\geq 1}\E\sup_{t\in[-r,T]}|X^n(t)|^2\leq C(T)(1+\|\xi\|_\infty^2).
\end{align}
In fact, by {\bf(A1)}, {\bf(A3)}, \eqref{GIN} and the second inequality in \eqref{sin}, it holds
\begin{equation}\begin{split}\label{LIN}&\<F(t,x),x\>\leq C(T)(1+|x|^2),\ \ |H(t,\xi)|\leq C(T)(1+\|\xi\|_\infty),\\
&|G^n(t,x)|\leq C(T)(1+|x|),\ \ x\in\R^d,t\in[0,T], \xi\in\C^d, n\geq 1.
\end{split}\end{equation}
For each integer  $N\ge1$, define $\tau^n_N=\inf\big\{t\in[0,T]: |X^{n}(t)|\ge N\big\}$ and $\inf{\emptyset}=\infty$ by convention.
 By It\^{o}'s formula, we derive from \eqref{LIN} that
\begin{equation*}
\begin{split}
|X^{n}(t\wedge\tau_N^n)|^2
&\le \xi(0)^2+C+C\int_0^{t\wedge\tau_N^n}\sup_{u\in[-r,s]}|X^{n}(u)|^2\d s\\
&+2\int_0^{t\wedge \tau_N^n}\<X^{n}(s),G^n(s,X^n(s))\d \bar{W}(s)\>\\
\end{split}
\end{equation*}
for some constant $C>0$. Applying BDG's inequality, \eqref{LIN} and Gr\"{o}nwall's ineuqality, it is standard to derive \eqref{sun}.
For any $1\leq i\leq d$, let $X^{n,i}$ be the $i$-th component of $X^{n}$.
For any $m,n\geq 1$, it follows from It\^{o}'s formula that
\begin{align*}V_\varepsilon(X^{m,i}(t)-X^{n,i}(t))&=\int_0^tV_\varepsilon'(X^{m,i}(s)-X^{n,i}(s))(F_i(s,X^{m}(s)) -F_i(s,X^{n}(s)))\d s\\
&+\int_0^tV_\varepsilon'(X^{m,i}(s)-X^{n,i}(s))(H_i(s,X^m_{s})-H_i(s,X^n_{s}))\d s\\
&+\frac{1}{2}\int_0^tV_\varepsilon''(X^{m,i}(s)-X^{n,i}(s))(G_i^m(s,X^{m,i}(s))-G_i^n(s,X^{n,i}(s)))^2\d s\\
&+\int_0^tV_\varepsilon'(X^{m,i}(s)-X^{n,i}(s))(G_i^m(s,X^{m,i}(s))-G_i^n(s,X^{n,i}(s)))\d \bar{W}^i(s).
\end{align*}
This combined with \eqref{R1} implies that for any $1\leq i\leq d$,
\begin{align*}
&\sum_{i=1}^d|X^{m,i}(t)-X^{n,i}(t)|-d\vv\\
&\leq \int_0^t\sum_{i=1}^dV_\varepsilon'(X^{m,i}(s)-X^{n,i}(s))(F_i(s,X^{m}(s)) -F_i(s,X^{n}(s)))\d s\\
&+\int_0^t\sum_{i=1}^dV_\varepsilon'(X^{m,i}(s)-X^{n,i}(s))(H_i(s,X^m_{s})-H_i(s,X^n_{s}))\d s\\
&+\frac{1}{2}\int_0^t\sum_{i=1}^dV_\varepsilon''(X^{m,i}(s)-X^{n,i}(s))(G_i^m(s,X^{m,i}(s))-G_i^n(s,X^{n,i}(s)))^2\d s\\
&+\int_0^t\sum_{i=1}^dV_\varepsilon'(X^{m,i}(s)-X^{n,i}(s))(G_i^m(s,X^{m,i}(s))-G_i^n(s,X^{n,i}(s)))\d \bar{W}^i(s)\\
&=:J^{m,n}_1+J^{m,n}_2+J^{m,n}_3+J^{m,n}_4,\ \ t\in[0,T].
\end{align*}
By \eqref{R1} and {\bf(A1)}, we conclude that
\begin{align*}J^{m,n}_1&=\int_0^t\sum_{i=1}^d\bigg[V_\varepsilon'(X^{m,i}(s)-X^{n,i}(s))\mathrm{sgn}(X^{m,i}(s)-X^{n,i}(s))\\
&\qquad\qquad\quad\times (F_i(s,X^{m}(s)) -F_i(s,X^{n}(s)))\mathrm{sgn}(X^{m,i}(s)-X^{n,i}(s))\bigg]\d s\\
&\leq K_1^+ d\int_0^t|X^{m}(s)-X^{n}(s)|\d s,\ \ t\in[0,T].
\end{align*}
\eqref{R1} and {\bf(A3)} yield that
$$J^{m,n}_2\leq K_3d\int_0^t\|X^m_s-X^n_s\|_\infty\d s,\ \ t\in[0,T].$$
Note that \eqref{R2} and \eqref{GIN} derive
\begin{equation*}\begin{split}
J^{m,n}_3
&\leq 2K_2^2\int_0^t\sum_{i=1}^d\varepsilon^{2\alpha_i}\d s +2\int_0^t\sum_{i=1}^d\e^{\frac{1}{\varepsilon}}(G_i^m(s,X^{n,i}(s))-G_i^n(s,X^{n,i}(s)))^2\d s\\
&\leq 2tK_2^2\sum_{i=1}^d\varepsilon^{2\alpha_i} +2t\e^{\frac{1}{\varepsilon}}\sum_{i=1}^d\sup_{s\in[0,t], z\in\R}|G_i^m(s,z)-G_i^n(s,z)|^2\\
&=:H^{m,n,\varepsilon}(t),\ \ t\in[0,T].
\end{split}\end{equation*}
So, \eqref{R1} and Lemma \ref{Gro} imply that for any $p\in(0,1)$, there exist constants $c_1(p), c_2(p)>0$ such that
\begin{align*}\E\sup_{t\in[0,T]}|X^m(t)-X^n(t)|^p\leq c_1(p)\e^{c_2(p)(K_1^++K_3)dT}(H^{m,n,\varepsilon}(T)+d\varepsilon)^p.
\end{align*}
Thanks to \eqref{uni}, we derive
\begin{align*}
\limsup_{m,n\to\infty}|H^{m,n,\varepsilon}(T)|\leq 2TK_2^2\sum_{i=1}^d\varepsilon^{2\alpha_i}.
\end{align*}
So, for $p\in(0,1)$, it holds
\begin{align*}\limsup_{m,n\to\infty}\E\sup_{t\in[0,T]}|X^m(t)-X^n(t)|^p\leq c_1(p)\e^{c_2(p)(K_1^++K_3)dT}(2TK_2^2\sum_{i=1}^d\varepsilon^{2\alpha_i}+d\varepsilon)^p.
\end{align*}
Letting $\varepsilon\to0$, we conclude that for any $p\in(0,1)$,
\begin{align*}\limsup_{m,n\to\infty}\E\sup_{t\in[0,T]}|X^m(t)-X^n(t)|^p=0.
\end{align*}
So, there exists a continuous stochastic process $\{\bar{X}(t)\}_{t\in[-r,T]}$ satisfying $\bar{X}_0=\xi$ and
\begin{align}\label{lis}\limsup_{n\to\infty}\E\sup_{t\in[0,T]}|X^n(t)-\bar{X}(t)|^p=0.
\end{align}
This yields that there exists a subsequence $\{n_k\}_{k\geq 1}$ such that $\P$-a.s.
\begin{align}\label{ALS}\lim_{k\to\infty}\sup_{t\in[0,T]}|X^{n_k}(t)-\bar{X}(t)|=0,,\ \ \sup_{k\geq 1}\sup_{t\in[-r,T]}(|X^{n_k}(t)|+|\bar{X}(t)|)<\infty.
\end{align}
Moreover, \eqref{sun} and Fatou's Lemma imply
$$\E\sup_{t\in[-r,T]}|\bar{X}(t)|^2\leq C(T)(1+\|\xi\|_\infty^2).$$
So, by the local boundedness of $F,H$, the continuity of $F(s,\cdot), H(s,\cdot)$, \eqref{ALS} and the dominated convergence theorem, we conclude that $\P$-a.s.
$$\lim_{k\to\infty}\sup_{t\in[0,T]}\left|\int_0^t(F(s,X^{n_k}(s))+H(s,X^{n_k}_s))\d s-\int_0^t(F(s,\bar{X}(s))+H(s,\bar{X}_s))\d s\right|=0.$$
Moreover, by Markov's inequality, BDG's inequality, \eqref{uni}, \eqref{GIN} and \eqref{lis}, for any $\varepsilon>0$ and $p\in(0,1)$, we have
\begin{align*}&\limsup_{k\to\infty}\P\left(\sup_{t\in[0,T]}\left|\int_0^t[G^{n_k}(s,X^{n_k}(s))-G(s,\bar{X}(s))]\d \bar{W}(s)\right|\geq \varepsilon\right)\\
&\leq\limsup_{k\to\infty}\frac{1}{\varepsilon^p}\E\sup_{t\in[0,T]}\left|\int_0^t[G^{n_k}(s,X^{n_k}(s))-G(s,\bar{X}(s))]\d \bar{W}(s)\right|^p\\
&\leq c(p)\limsup_{k\to\infty}\frac{1}{\varepsilon^p}\E\left(\int_0^T\sum_{i=1}^d[G_i^{n_k}(s,X^{n_k,i}(s))-G_i(s,\bar{X}^i(s))]^2\d s\right)^{\frac{p}{2}}\\
&\leq c(p)T^{\frac{p}{2}}\frac{1}{\varepsilon^p}\limsup_{k\to\infty}\left(\sum_{i=1}^d\sup_{s\in[0,T],x\in\R}|G_i^{n_k}(s,x)-G_i(s,x)|^2\right)^{\frac{p}{2}}\\
&+c(p)T^{\frac{p}{2}}K_2^p\frac{1}{\varepsilon^p}\limsup_{k\to\infty} \E \left(\sup_{s\in[0,T]}\sum_{i=1}^d|X^{n_k,i}(s)-\bar{X}^i(s)|^{2\alpha_i}\right)^{\frac{p}{2}}=0.
\end{align*}
Therefore, replacing $n$ by $n_k$ in \eqref{SOS} and letting $k\to\infty$, it holds $\P$-a.s.
\begin{equation*}
\bar{X}(t)= \int_0^tF(s,\bar{X}(s))\d s+\int_0^tH(s,\bar{X}_{s})\d s +\int_0^tG(s,\bar{X}(s))\d \bar{W}(s),\ \ t\in[0,T].
\end{equation*}
This means that $\{\bar{X}(t)\}_{t\in[-r,T]}$ is a strong solution to \eqref{MEQ}.

{\bf Step 2. Uniqueness of the strong solution.}

Let $X^\xi(t)$ be the solution to \eqref{MEQ} with initial value $\xi\in\C^d$. By the same argument to derive \eqref{sun}, we obtain
 $$\E\sup_{t\in[-r,T]}|X^{\xi}(t)|^2
\le C(T)(1+\|\xi\|_\infty^2).$$
So, Jensen's inequality implies \eqref{eq3}. For any $1\leq i\leq d$, let $X^{\xi,i}$ be the $i$-th component of $X^{\xi}$.
Applying Ito's formula, for any $1\leq i\leq d$, we have
\begin{equation*}
\begin{split}
V_\vv(X^{\xi,i}(t)-X^{\eta,i}(t))&=V_\vv(\xi^i(0)-\eta^i(0))\\
&+\int_0^t
V'_\vv(X^{\xi,i}(s)-X^{\eta,i}(s))\big\{F_i(s,X^\xi(s))-F_i(s,X^\eta(s))\big\} \d s\\
&+\int_0^t
V'_\vv(X^{\xi,i}(s)-X^{\eta,i}(s))\big\{H_i(s,X^\xi_s)-H_i(s,X^\eta_s)\big\} \d s\\
&+\frac{1}{2}\int_0^t
V''_\vv(X^{\xi,i}(s)-X^{\eta,i}(s))\big\{G_i(s,X^{\xi,i}(s))-G_i(s,X^{\eta,i}(s))\big\}^2 \d s\\
&+\int_0^t V'_\vv(X^{\xi,i}(s)-X^{\eta,i}(s)) \big\{G_i(s,X^{\xi,i}(s))-G_i(s,X^{\eta,i}(s)) \big\}\d \bar{W}^i(s).
\end{split}
\end{equation*}
By {\bf (A1)}-{\bf(A3)}, \eqref{R1} and \eqref{R2}, it holds
\begin{equation*}
\begin{split}
|X^\xi(t)-X^\eta(t)|&\leq d\varepsilon+K_2^2T\sum_{i=1}^d\varepsilon^{2\alpha_i}+C(T)\|\xi-\eta\|_\infty\\
&+C\int_0^t
\sup_{u\in[0,s]}|X^\xi(u)-X^\eta(u)|\d s+M_t, \ \ t\in[0,T]
\end{split}
\end{equation*}
for a martingale $M_t$ and some constants $C, C(T)>0$.
Then for any $p\in(0,1)$, applying Lemma \ref{Gro}, we get
\begin{align*}\E\sup_{t\in[0,T]}|X^\xi(t)-X^\eta(t)|^p\leq c_1(p)\e^{c_2(p)T}\left(d\varepsilon+K_2^2T\sum_{i=1}^d\varepsilon^{2\alpha_i}+C(T)\|\xi-\eta\|_\infty\right)^p.
\end{align*}
Letting $\vv\to0$, we derive \eqref{EQA}, which yields the uniqueness of the strong solution of \eqref{MEQ}.
\end{proof}
\section{Path Dependent McKean-Vlasov SDEs with H\"{o}lder Continuous Diffusion}
Throughout this section, we make the following assumption.
\begin{enumerate}
\item[{\bf(H)}] Assume that the following conditions hold.
\begin{enumerate}
\item[({\bf Hb})] $b$ is locally bounded in $[0,T]\times \R\times\scr P_1(\R)$. For any $t\in[0,T]$, $b(t,\cdot,\cdot)$ is continuous in $\R\times \scr P_1(\R)$,
and there exists a constant $K_b\in\R$ such that for
$x,y\in\R$ and $\mu,\nu\in\scr P_1(\R)$,
\begin{equation*}
 [b(t,x,\mu)-b(t,y,\nu)]\mathrm{sgn}(x-y)\le K_b(\W_1(\mu,\nu)+|x-y|),\ \ t\in[0,T],
\end{equation*}
where $\mbox{sgn}(\cdot)$ means the sign function.
\item[({\bf H$\sigma$})] There exist constants
$K_\sigma\geq0$ and $\aa\in[\ff{1}{2},1]$ such that  $$ |\si(t,x)-\si(t,y)|\le
K_\sigma|x-y|^{\alpha}, \ \ |\sigma(t,0)|\leq K_\sigma,\ \ x,y\in\R,t\in[0,T] $$

\item[({\bf HB})] There exists a constant $K_B\geq 0$ and a probability measure $m$ on $[-r,0]$ such that for any $\xi,\eta\in\C,\mu,\nu\in\scr P_1(\R),t\in[0,T]$,
\begin{equation*}
|B(t,\xi,\mu)-B(t,\eta,\nu)|\le K_B\big\{\|\xi-\eta\|_{L^1(m)}+\mathbb{W}_1(\mu,\nu)\},\ \ |B(t,0,\delta_0)|\leq K_B,
\end{equation*}
here $\delta_0$ is the Dirac measure at the point $0$.
\end{enumerate}
\end{enumerate}
\subsection{Well-posedness}
\begin{thm} \label{EXI} Assume {\bf (H)}. Then for any   $X_0\in L^1(\OO\to(\C,\|\cdot\|_\infty);\F_0,\P)$, \eqref{E1}
has a unique strong solution $(X(t))_{t\in[-r,T]}$ with initial
value $X_0$ and there exists a constant $C(T)>0$ such that
\begin{equation}\label{FMT}
\E\sup_{t\in[0,T]}\|X_t\|_\infty\le C(T)(1+\E\|X_0\|_\infty).
\end{equation}
Moreover, for two solutions $X(t)$ and $\tilde{X}(t)$,
\begin{equation}\begin{split}\label{DDS}
 &\sup_{t\in[0,T]}\E|X(t)-\tilde{X}(t)|\\
 &\quad\le C(T)\E\left\{|X(0)-\tilde{X}(0)|+K_B\int_{-r}^0 m([-r,u])|X(u)-\tilde{X}(u)|\d u\right\}.
\end{split}\end{equation}
\end{thm}

\begin{proof}
For $\mu\in C([0,T]; \scr P_1(\R)) $, $x\in\R$ and $\xi\in\C$, let $b^\mu(t,x) =b(t,x,\mu_t)$, $B^\mu(t,\xi)=B(t,\xi,\mu_t)$.  Consider
\beq\label{EW1} \d X^\mu(t)=b^\mu(t,X^\mu(t))\d t+B^\mu(t,X^\mu_t)\d t +\si(t,X^\mu(t))\d W(t),\ \  t\in [0,T].\end{equation}
By {\bf (H)} and Theorem \ref{Existence}, \eqref{EW1} is strongly well-posed and let $\Phi_t(\mu)=\L_{X^\mu(t)}, t\in[0,T]$, where $(X^\mu(t))_{t\in[-r,T]}$ solves  \eqref{EW1} with $X^\mu_0\in L^1(\OO\to(\C,\|\cdot\|_\infty);\F_0,\P)$. In view of {\bf(Hb)},
\begin{align}\label{mon}b(t,x,\mu)\mathrm{sgn}(x)\leq C_0(T)(1+|x|+\mu(|\cdot|)), \ \ t\in[0,T]
\end{align}
holds for some constant $C_0(T)>0$.
So, by the similar argument to derive \eqref{sun}, we get
\begin{align}\label{SEM}
\E(\sup_{s\in[-r,t]}|X^\mu(s)|^2|\F_0)\leq C(T)^2\left(1+\|X_0^\mu\|_\infty^2+\int_0^t\mu_s(|\cdot|)^2\d s\right),\ \ t\in[0,T],
\end{align}
which yields
\begin{align}\label{CTS}
\E(\sup_{s\in[-r,t]}|X^\mu(s)|)\leq C(T)\left(1+\E\|X_0^\mu\|_\infty+\left(\int_0^t\mu_s(|\cdot|)^2\d s\right)^{\frac{1}{2}}\right),\ \ t\in[0,T]
\end{align}
for some constant $C(T)\geq 0.$
By It\^o's formula,  it
follows that
\begin{equation}\label{EE}
\begin{split}
V_\vv(X^\mu(t)-X^\nu(t))&=V_\vv(X^\mu(0)-X^\nu(0))\\
&+\int_0^t
V'_\vv(X^\mu(s)-X^\nu(s))\big\{b(s,X^\mu(s),\mu_s)-b(s,X^\nu(s),\nu_s)\big\} \d s\\
&+\int_0^t
V'_\vv(X^\mu(s)-X^\nu(s))\big\{B(s,X^\mu_s,\mu_s)-B(s,X^\nu_s,\nu_s)\big\} \d s\\
&+\frac{1}{2}\int_0^t
V''_\vv(X^\mu(s)-X^\nu(s))\big\{\si(s,X^\mu(s))-\si(s,X^\nu(s))\big\}^2 \d s\\
&+\int_0^tV'_\vv(X^\mu(s)-X^\nu(s)) \big\{\si(s,X^\mu(s))-\si(s,X^\nu(s)) \big\}\d W(s)\\
&=:
I_{1,\vv}+I_{2,\vv}(t)+I_{3,\vv}(t)+I_{4,\vv}(t)+I_{5,\vv}(t).
\end{split}
\end{equation}
Using \eqref{R1}, we get
\begin{equation*}
\begin{split}
I_{1,\vv}\le |X^\mu(0)-X^\nu(0)|.
\end{split}
\end{equation*}
Moreover, it follows from \eqref{R1} and {\bf (Hb)} that
\begin{equation*}
\begin{split}
I_{2,\vv}(t)
&\le K_b^+\int_0^t
\big\{|X^\mu(s)-X^\nu(s)|+\mathbb{W}_1(\mu_s,\nu_s)\big\}\d s, \ \ t\in[0,T].
\end{split}
\end{equation*}
By \eqref{R1}, {\bf (HB)} and Fubini's theorem, we arrive at
\begin{align*}
&I_{3,\vv}(t) \leq K_B\int_0^t\big\{\|X^\mu_s-X^\nu_s\|_{L^1(m)}+\mathbb{W}_1(\mu_s,\nu_s)\big\}\d s\\
&= K_B\int_{-r}^0\left(\int_\theta^{t+\theta}|X^\mu(u)-X^\nu(u)|\d u\right) m(\d \theta)+K_B\int_0^t\mathbb{W}_1(\mu_s,\nu_s)\d s\\
&\leq K_B\int_{-r}^0\left(\int_0^{t}|X^\mu(u)-X^\nu(u)|\d u\right) m(\d \theta)\\
&+ K_B\int_{-r}^0\left(\int_\theta^{0}|X^\mu(u)-X^\nu(u)|\d u\right) m(\d \theta)+K_B\int_0^t\mathbb{W}_1(\mu_s,\nu_s)\d s\\
&\leq K_B\left(\int_0^{t}|X^\mu(u)-X^\nu(u)|\d u\right)\\
 &+K_B\int_{-r}^0 m([-r,u])|X^\mu(u)-X^\nu(u)|\d u+K_B\int_0^t\mathbb{W}_1(\mu_s,\nu_s)\d s,\ \ t\in[0,T].
\end{align*}
Furthermore, by {\bf (H$\sigma$)}, \eqref{R2} and using $\aa\in[1/2,1]$, we deduce
\begin{equation*}
\begin{split}
I_{4,\vv}(t) \le K_\sigma^{2} T\varepsilon^{2\alpha},\ \ t\in[0,T].
\end{split}
\end{equation*}
In addition, by \eqref{R1}, {\bf (H$\sigma$)} and \eqref{CTS}, we have $\E I_{5,\vv}(t)=0$.
Taking expectation in \eqref{EE}, using \eqref{R1} and letting $\vv\downarrow0$, there exists a constant $C>0$ such that
\begin{equation}\label{W11}\begin{split}
 \E|X^\mu(t)-X^\nu(t)|&\le \E|X^\mu(0)-X^\nu(0)|+K_B\int_{-r}^0 m([-r,u])\E|X^\mu(u)-X^\nu(u)|\d u\\
 &+C\int_0^t\E|X^\mu(s)-X^\nu(s)| \d
s+(K_b^++K_B)\int_0^t\mathbb{W}_1(\mu_s,\nu_s)\d s.
\end{split}\end{equation}
It follows from Gr\"{o}nwall's inequality that
\begin{equation*}\begin{split}
\E|X^\mu(t)-X^\nu(t)|&\le \e^{Ct}\left\{\E|X^\mu(0)-X^\nu(0)|+K_B\int_{-r}^0 m([-r,u])\E|X^\mu(u)-X^\nu(u)|\d u\right\}\\
 &+C(T)\int_0^t\mathbb{W}_1(\mu_s,\nu_s)\d s.
\end{split}\end{equation*}
So, when $X^\mu_0=X^\nu_0$, for $\lambda=2C(T)$, we get
\begin{equation*}
\begin{split}
 \sup_{t\in[0,T]}\e^{-\lambda t}\W_1(\Phi_t(\mu),\Phi_t(\nu))\leq\frac{1}{2}\sup_{t\in[0,T]}\e^{-\lambda t}\mathbb{W}_1(\mu_t,\nu_t).
\end{split}
\end{equation*}
Set
$$E_{\lambda}:= \big\{\mu\in C([0,T]; \scr P_1(\R)):\mu_0=\L_{X^\mu(0)}\big\}$$ equipped with the complete metric
$$ \rr(\mu,\nu):= \sup_{t\in[0,T]} \e^{-\lambda t}\W_1(\mu_t,\nu_t),\ \ \mu,\nu\in E_\lambda.$$
Then $\Phi$ is strictly contractive in $E_{\lambda}$. Consequently, the Banach fixed point theorem together with the definition of $\Phi$ implies that there exists a unique $\mu\in E_{\lambda}$ such that
 $$\Phi_t(\mu)=\mu_t=\L_{X^\mu(t)}, ~~~t\in[0,T].$$
Finally, taking $\mu_t=\L_{X^\mu(t)}$ in \eqref{CTS}, \eqref{FMT} follows from Gr\"{o}nwall's inequality. Similarly, taking $\mu_t=\L_{X(t)}, \nu_t=\L_{\tilde{X}(t)}$, $X^\mu(t)=X(t)$, $X^\nu(t)=\tilde{X}(t)$ in \eqref{W11},
\eqref{DDS} holds by Gr\"{o}nwall's inequality.

\end{proof}
\subsection{Propagation of Chaos}
Let
$N\ge1$ be an integer and $(X_0^i,W^i(t))_{1\le i\le N}$ be i.i.d.\,copies of $(X_0,W(t))$ with $\F_0$-measurable $\C$-valued random variable $X_0$. Consider
\begin{equation*}
\d X^i(t)=b(t,X^i(t),\L_{X^i(t)})\d t+B(t,X_t^i,\L_{X^i(t)})\d t+\si(t,X^i(t))\d
W^i(t),\ \ 1\leq i\leq N.
\end{equation*}
 Let
\begin{equation}\label{H1}
\tt\mu^N_t =\ff{1}{N}\sum_{j=1}^N\dd_{X^j(t)}.
\end{equation}
Consider the stochastic $N$-interacting particle
system:
\begin{equation}\label{eq4}
\d X^{i,N}(t)=b(t,X^{i,N}(t),\hat\mu_t^N)\d t+B(t,X_t^i,\hat\mu_t^N)\d t+\si(t,X^{i,N}(t))\d
W^i(t),\ \ X_0^{i,N}=X_0^i,
\end{equation}
where  $\hat\mu_t^N$ is the empirical distribution corresponding
to  $X^{1,N}(t),\cdots,X^{N,N}(t)$, i.e.
\begin{equation*}
 \hat\mu_t^N :=\ff{1}{N}\sum_{j=1}^N\dd_{X^{j,N}(t)}.
 \end{equation*}
Applying Theorem \ref{Existence}, the well-posedness of the stochastic  $N$-interacting
 particle system \eqref{eq4} can be proved in the following lemma.

\begin{lem}\label{lem}
 Assume {\bf(H)} and $X_0^i\in L^1(\Omega\to(\C,\|\cdot\|_\infty);\F_0,\P), 1\leq i\leq N$. Then, for each $N\ge1$, \eqref{eq4} admits a unique strong solution $\{(X^{i,N}(t))_{t\in[-r,T]}\}_{1\leq i\leq N}$ and
\begin{equation}\label{ets}\E \sup_{t\in[-r,T]}|X^{i,N}(t)|\le C(T)(1+\E\|X_0^i\|_{\infty}),\ \ 1\leq i\leq N\end{equation}
holds for some constant $C(T)>0$.
\end{lem}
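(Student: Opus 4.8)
The plan is to regard the coupled system \eqref{eq4} as a single path dependent SDE for the $\R^N$-valued process $\mathbf X^{N}:=(X^{1,N},\dots,X^{N,N})$ and to rerun the Yamada--Watanabe scheme of Section~2 in parallel over the $N$ coordinates. Two structural features make this possible. First, the diffusion of \eqref{eq4} is \emph{diagonal}: the $i$-th coordinate is driven by $W^i$ alone, with a coefficient $\si(t,X^{i,N}(t))$ that, by {\bf (H$\sigma$)}, is $\aa$-H\"older in its own variable. Second, using $\W_1\big(\ff1N\sum_j\dd_{x_j},\ff1N\sum_j\dd_{y_j}\big)\le\ff1N\sum_j|x_j-y_j|$, the empirical-measure dependence of the drift turns into an honest dependence on the vector variable, and {\bf (Hb)}, {\bf (HB)} then supply the one-sided / Lipschitz bounds needed below; note also that {\bf (Hb)}, upon letting the two spatial arguments coincide and using continuity, gives $|b(t,x,\mu)-b(t,x,\nu)|\le K_b^+\W_1(\mu,\nu)$, so that $b$ and $B$ are jointly continuous, as Proposition \ref{exs} requires.

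\emph{Existence and uniqueness} follow exactly as in the proof of Theorem \ref{Existence}. Mollify $\si$ to $\si^n$ as in \eqref{sgn}; by the two features above, for each fixed $N$ the system with $\si^n$ meets the hypotheses of Proposition \ref{exs} (the monotonicity constant may depend on $N$, via Cauchy--Schwarz on the cross terms $|x_i-y_i||x_j-y_j|$), hence has a unique non-explosive strong solution $\mathbf X^{N,n}$, with moment bounds uniform in $n$ from the computation behind \eqref{sun} applied to $\sum_i|X^{i,N,n}|^2$, using $\<b(t,x,\mu),x\>\le C(1+|x|^2+\mu(|\cdot|)^2)$ and $\si(t,x)^2\le C(1+|x|^2)$. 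Then apply It\^o's formula to $\sum_{i=1}^N V_\vv\big(X^{i,N,n}(t)-X^{i,N,m}(t)\big)$ — whose Hessian is diagonal, so no cross terms appear — and bound the second-order term of each coordinate by $K_\sigma^2\vv^{2\aa}$ as in \eqref{J3S} (this is the only place $\aa\ge\ff12$ is used); \eqref{R1} and Lemma \ref{Gro} show that $(\mathbf X^{N,n})_n$ is Cauchy in $L^p$, $p\in(0,1)$, uniformly on $[0,T]$, and passing to the limit and identifying the equation as in Theorem \ref{Existence} (the limit inheriting the moment bound by Fatou) produces a strong solution. Pathwise uniqueness is identical: for two solutions $(X^{i,N})$, $(\tilde X^{i,N})$ with the same initial data and empirical measures $\hat\mu^N,\hat\nu^N$, apply It\^o to $\sum_i V_\vv\big(X^{i,N}(t)-\tilde X^{i,N}(t)\big)$, use {\bf (Hb)}, {\bf (HB)}, $\W_1(\hat\mu^N_s,\hat\nu^N_s)\le\ff1N\sum_j|X^{j,N}(s)-\tilde X^{j,N}(s)|$, the $\aa\ge\ff12$ estimate of the diffusion term, and Lemma \ref{Gro} after $\vv\downarrow0$.

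\emph{For the bound \eqref{ets}} I would condition on $\F_0$, so that the $X_0^i$ become a.s.\ finite fixed data and one may argue as for \eqref{sun}/\eqref{eq3}. Apply It\^o to $|X^{i,N}(t\wedge\tau_L)|^2$, $\tau_L=\inf\{t:\max_j|X^{j,N}(t)|\ge L\}$, use the growth bounds $\<b(t,x,\mu),x\>\le C(1+|x|^2+\mu(|\cdot|)^2)$, $|B(t,\xi,\mu)|^2\le C(1+\|\xi\|_\infty^2+\mu(|\cdot|)^2)$, $\si(t,x)^2\le C(1+|x|^2)$, and bound $\hat\mu^N_s(|\cdot|)^2\le\ff1N\sum_j\sup_{u\in[-r,s]}|X^{j,N}(u)|^2$; taking conditional expectation, using BDG and summing over $i$, Gr\"onwall (and $L\uparrow\infty$) give $\sum_i\E\big(\sup_{u\in[-r,t]}|X^{i,N}(u)|^2\,\big|\,\F_0\big)\le C(T)\big(N+\sum_i\|X_0^i\|_\infty^2\big)$, and inserting this back into the single-index inequality and applying Gr\"onwall once more gives $\E\big(\sup_{u\in[-r,T]}|X^{i,N}(u)|^2\,\big|\,\F_0\big)\le C(T)\big(1+\|X_0^i\|_\infty^2+\ff1N\sum_j\|X_0^j\|_\infty^2\big)$. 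Taking square roots, using $\ss{a+b+c}\le\ss a+\ss b+\ss c$, and then expectations — the $X_0^j$ being i.i.d.\ — yields \eqref{ets}.

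The genuinely delicate point is this last step. Because $\si$ has linear growth, It\^o's formula on $|X^{i,N}|^2$ unavoidably produces the empirical \emph{second} moment $\E[\hat\mu^N_s(|\cdot|)^2]$, which for finite $N$ carries an $O(1/N)$ fluctuation contribution involving $\ff1N\sum_j\|X_0^j\|_\infty^2$; controlling it so that the constant in \eqref{ets} is independent of $N$ while the right-hand side is expressed through the $L^1$-norm $\E\|X_0^i\|_\infty$ (rather than an $L^2$-norm) is what requires care, and is handled using the exchangeability of the system together with the self-improving Gr\"onwall argument already used for \eqref{FMT} in Theorem \ref{EXI}. Everything else is a routine vectorization of the one-dimensional arguments of Section~2.
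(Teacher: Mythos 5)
Your overall strategy is the paper's: rewrite \eqref{eq4} as a single $\R^N$-valued path dependent SDE with diagonal diffusion, use \eqref{W3} to turn the empirical-measure dependence into a vector dependence verifying the hypotheses of Proposition \ref{exs} after mollifying $\si$ as in \eqref{sgn}, run a Yamada--Watanabe comparison between the mollified solutions, identify the limit, and prove uniqueness with the same functional $V_\vv$. The only notable difference is organisational: you sum $V_\vv$ over the coordinates and invoke Lemma \ref{Gro}, whereas the paper works coordinate by coordinate, uses the Fubini computation of \eqref{BIS} (available here because {\bf(HB)} is stated with $\|\cdot\|_{L^1(m)}$) to avoid any running supremum, closes a Gr\"onwall inequality for $\sup_{s\le T}\E|\hat{X}^{k,i}(s)-\hat{X}^{n,i}(s)|$ by exchangeability, and only then upgrades to $\E\sup_s$ via BDG. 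Your summed functional forces you to bound $\sum_j\sup_{u\le s}|a_j(u)|$ by $N\sup_{u\le s}\sum_j|a_j(u)|$ before Lemma \ref{Gro} applies, which costs an $N$-dependent constant; this is harmless for existence and uniqueness at fixed $N$, so that part of your argument stands.

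The genuine gap is in the last step of \eqref{ets}, precisely the point you flag as delicate. Your conditional bound $\E\big(\sup_{u\in[-r,T]}|X^{i,N}(u)|^2\,\big|\,\F_0\big)\le C(T)\big(1+\|X_0^i\|_\infty^2+\ff1N\sum_j\|X_0^j\|_\infty^2\big)$ is correct, but after taking square roots and expectations you must control $\E\big(\ff1N\sum_j\|X_0^j\|_\infty^2\big)^{1/2}$, and under the hypothesis $X_0^i\in L^1$ this is \emph{not} bounded by $C(1+\E\|X_0^i\|_\infty)$ uniformly in $N$: if $\E\|X_0^1\|_\infty^2=\infty$ then $\ff1N\sum_j\|X_0^j\|_\infty^2\to\infty$ a.s., so its square root has diverging expectation by Fatou. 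Jensen needs $L^2$ data, and the exchangeability/Gr\"onwall improvement you invoke acts on the already-squared quantities and cannot remove the square. (To be fair, the paper's one-line justification ``\eqref{ets} follows by the same argument to \eqref{CTS}'' has exactly the same defect, since \eqref{CTS} is proved for a \emph{deterministic} frozen flow $\mu_s(|\cdot|)$, while $\hat\mu^N_s(|\cdot|)$ is random.) A repair that stays within $L^1$ is a genuine first-moment estimate: apply It\^o's formula to $\ss{1+x^2}$, use $\mathrm{sgn}(x)\,b(t,x,\mu)\le C(1+|x|+\mu(|\cdot|))$ from {\bf(Hb)} together with $\si(t,x)^2\le C(1+|x|^2)$ from {\bf(H$\sigma$)} (so the second-order term is bounded), control the martingale part by BDG via $\big(\int_0^t(1+|X(s)|^2)\,\d s\big)^{1/2}\le\ss{t}\,\big(1+\sup_{s\le t}|X(s)|\big)$ and absorb it for small $t$, then average over $i$ and use exchangeability to close Gr\"onwall for $\ff1N\sum_i\E\sup_{s}|X^{i,N}(s)|$ before reinserting into the single-index inequality. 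This produces \eqref{ets} with a constant independent of $N$ under the stated assumption.
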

\begin{proof}
For $x:=(x_1,x_2,\cdots,x_N)^*\in\R^N$, $\xi:=(\xi_1,\xi_2,\cdots,\xi_N)^*\in\C^N$, set $\tt\mu^N_x=\frac{1}{N}\sum_{i=1}^N\delta_{x_i}$ and
\begin{equation*}
\begin{split}
&\hat b(t,x):=\big(b(t,x_1,\tt\mu^N_x),\cdots,b(t,x_N,\tt\mu^N_x)\big)^*,~~~\hat B(t,\xi):=\big(B(t,\xi_1,\tt\mu^N_{\xi(0)}),\cdots,B(t,\xi_N,\tt\mu^N_{\xi(0)})\big)^*,\\
& \hat\si(t,x):=\mbox{diag}\big(\si(t,x_1),\cdots,\si(t,x_N)\big),~~~~\hat W(t):=\big(W^1(t),\cdots,W^N(t)\big)^*,\ \ t\in[0,T].
\end{split}
\end{equation*}
Then it is clear that $(\hat W(t))_{t\in[0,T]}$ is an $N$-dimensional Brownian
motion and \eqref{eq4} can be reformulated as
\begin{equation}\label{B1}
\d \hat{X}(t)=\hat b(t,\hat{X}(t))\d t+\hat{B}(t,\hat{X}_t)\d t+\hat\si(t,\hat{X}(t))\d \hat W(t),\ \ \hat{X}_0=(X_0^1,X_0^2,\cdots,X_0^N)^\ast.
\end{equation}
Note that
\begin{equation}\label{W3}
\W_1\left(\ff{1}{N}\sum_{i=1}^N\dd_{x_i},\ff{1}{N}\sum_{i=1}^N\dd_{\tilde{x}_i}\right)\le
\ff{1}{N}\sum_{i=1}^N|x_i-\tilde{x}_i|,\ \ x_i,\tilde{x}_i\in\R, 1\leq i\leq N.
\end{equation}
It is not difficult to see from {\bf (Hb)}, {\bf(HB)} and \eqref{W3} that $\hat{b}$ is locally bounded in $[0,T]\times \R^N$, for any $t\in[0,T]$, $\hat{b}(t,\cdot)$ is continuous,
\begin{equation}\label{ccb}\begin{split}
(\hat{b}_i(t,x)-\hat{b}_i(t,y))\mathrm{sgn}( x_i-y_i)
&=(b(t,x_i,\tt\mu^N_x)-b(t,y_i,\tt\mu^N_y))\mathrm{sgn}(x_i-y_i)\\
&\leq K_b(|x_i-y_i|+\W_1(\tt\mu^N_x,\tt\mu^N_y))\\
&\leq K_b^+(|x_i-y_i|+\frac{1}{N}\sum_{i=1}^N|x_i-y_i|)\\
&=K_b^+(1+N^{-\frac{1}{2}})|x-y|,\ \ x,y\in\R^N,1\leq i\leq N,
\end{split}\end{equation}
and
\begin{equation}\label{ccB}\begin{split}
|\hat{B}(t,\xi)-\hat{B}(t,\eta)|^2&\leq \sum_{i=1}^N|B(t,\xi_i,\tt\mu^N_{\xi(0)})-B(t,\eta_i,\tt\mu^N_{\eta(0)})|^2\\
&\leq 2K_B^2\sum_{i=1}^N(\|\xi_i-\eta_i\|_\infty^2+\W_1(\tt\mu^N_{\xi(0)},\tt\mu^N_{\eta(0)})^2)\\
&\leq 2K_B^2\sum_{i=1}^N(\|\xi_i-\eta_i\|_\infty^2+|\xi_i(0)-\eta_i(0)|^2)\\
&\leq 4K_B^2\sum_{i=1}^N\|\xi_i-\eta_i\|_\infty^2, \ \ \xi,\eta\in\C^N.
\end{split}\end{equation}
So, \eqref{ccb}, \eqref{ccB} and {\bf(H$\sigma$)} yield that {\bf(A1)}-{\bf(A3)} hold for $\hat{b}, \hat{B}, \hat{\sigma},N$ replacing $F,H,G,d$ respectively. Therefore, according to Theorem \ref{Existence}, for each $N\ge1$, \eqref{B1} and consequently \eqref{eq4} admits a unique strong solution $\{(X^{i,N}(t))_{t\in[-r,T]}\}_{1\leq i\leq N}$.
Finally, by It\^{o}'s formula, \eqref{mon}, {\bf(HB)} and {\bf (H$\sigma$)}, there exists a constant $C>0$ such that
\begin{equation*}
\begin{split}
|X^{i,N}(t)|^2&\leq |X^{i}(0)|^2+C\int_0^t\left[1+|X^{i,N}(s)|^2+\ff{1}{N}\sum_{j=1}^N|X^{j,N}(s)|^2\right]\d s\\
&+C\int_0^t\|X^{i,N}_s\|_{L^1(m)}^2\d s+\int_0^t 2X^{i,N}(s)\si(s,X^{i,N}(s)) \d W^{i}(s).
\end{split}
\end{equation*}
Using the same argument to derive \eqref{SEM}, we arrive at
\begin{equation*}
\begin{split}
\E(\sup_{t\in[-r,T]}|X^{i,N}(t)|^2|\F_0)\leq C(T)(1+\|X_0^i\|_\infty^2).
\end{split}
\end{equation*}
This implies \eqref{ets} by Jensen's inequality with respect to conditional expectation.
\end{proof}

Finally, we give the quantitative propagation of chaos.

\begin{thm}\label{POC} Assume that $\E\|X_0^i\|_\infty^{p}<\infty$ for some $p>1$ and $p\neq 2$. Let $\mu_t=\L_{X^i(t)}$.
\begin{enumerate}
\item[(1)]
Then there exists a constant $C(p,T)>0$ depending only on $p,T$ such that
\begin{equation}\label{EXY}
\sup_{t\in[0,T]}\E\big|X^i(t)-X^{i,N}(t)\big|\le C(p,T)(1+\left(\E\|X_0^i\|_\infty^p\right)^{\frac{1}{p}})(N^{-1/2}+N^{-\frac{p-1}{p}}),
\end{equation}
and consequently,
\begin{equation}\begin{split}\label{WAR}
&\sup_{t\in[0,T]}\E\W_1(\hat{\mu}_t^N,\mu_t)\le C(p,T)(1+\left(\E\|X_0^i\|_\infty^p\right)^{\frac{1}{p}})(N^{-1/2}+N^{-\frac{p-1}{p}}).
\end{split}\end{equation}
\item[(2)] If in addition, $\sigma^2\geq \delta$ for some $\delta>0$ and there exists a constant $K\geq 0$ such that
\begin{equation}\begin{split}\label{sig} &|b(t,x,\mu)-b(t,x,\nu)|+|B(t,\xi,\mu)-B(t,\xi,\nu)|\\
&\qquad\quad\leq K (1\wedge\W_1(\mu,\nu)),\ \ \mu,\nu\in\scr P_1(\R), t\in[0,T], x\in\R,\xi\in\C.
\end{split}\end{equation}
then there exists a constant $C(p,T)>0$ depending only on $p,T$ such that for any $1\leq k\leq N$,
\begin{equation*}\begin{split}
&\sup_{t\in[0,T]}\|\L_{(X^{1,N}(t),X^{2,N}(t),\cdots,X^{k,N}(t))}-\mu_t^{\otimes k}\|^2_{var}\\
&\leq 2\sup_{t\in[0,T]}\Ent\left(\mu_t^{\otimes k}|\L_{(X^{1,N}(t),X^{2,N}(t),\cdots,X^{k,N}(t))}\right)\\
&\leq kC(p,T)(1+\left(\E\|X_0^i\|_\infty^p\right)^{\frac{1}{p}})(N^{-\frac{1}{2}}+N^{-\frac{(p-1)}{p}}),
\end{split}\end{equation*}
where $\mu_t^{\otimes k}=\prod_{i=1}^k\mu_t$, the $k$-independent product of $\mu_t$.
\end{enumerate}
\end{thm}
\begin{proof}
Applying It\^o's formula, it holds
\begin{equation*}
\begin{split}
&V_\vv(X^i(t)-X^{i,N}(t))\\
&=\int_0^tV_\vv'(X^i(s)-X^{i,N}(s))\big(b(s,X^i(s),\hat{\mu}^N_s)-b(s,X^{i,N}(s),\mu_s)\big)\d s\\
&\quad+\int_0^tV_\vv'(X^i(s)-X^{i,N}(s))\big(B(s,X^i_s,\hat{\mu}^N_s)-B(s,X^{i,N}_s,\mu_s)\big)\d s\\
&\quad+\ff{1}{2}\int_0^tV_\vv''(X^i(s)-X^{i,N}(s))\big(\si(s,X^i(s))-\si(s,X^{i,N}(s))\big)^2\d s\\
&\quad+\int_0^tV_\vv'(X^i(s)-X^{i,N}(s))\big(\si(s,X^i(s))-\si(s,X^{i,N}(s))\big)\d W^i(s).
\end{split}
\end{equation*}
By the same argument to derive \eqref{W11} and adopting the triangle inequality for $\mathbb{W}_1$, we arrive at
\begin{equation*}
\E|X^i(t)-X^{i,N}(t)|\le
C\int_0^t\big\{\E|X^i(s)-X^{i,N}(s)|+\E\mathbb{W}_1(\mu_s,\tt\mu_s^N)+\E\mathbb{W}_1(\tt\mu_s^N,\hat\mu_s^N)\big\}\d
s,
\end{equation*}
where $\tt\mu^N$ was introduced in \eqref{H1}. By \cite[Theorem 1]{FG}, there exists a constant $C(p,T)>0$ such that
\begin{equation}\label{w2}
\E\mathbb{W}_1(\mu_t,\tt\mu_t^N)\le C(p,T)(1+(\E\|X^i_0\|^p_\infty)^{\frac{1}{p}}) (N^{-1/2}+N^{-\frac{p-1}{p}}).
\end{equation}
As a result, it follows from \eqref{W3} and \eqref{w2}
that
\begin{equation*}
\begin{split}
&\E|X^i(t)-X^{i,N}(t)|\\
&\le
C_1\int_0^t\Big\{\E|X^i(s)-X^{i,N}(s)|+C(p,T)(1+(\E\|X^i_0\|^p_\infty)^{\frac{1}{p}}) (N^{-1/2}+N^{-\frac{p-1}{p}})\Big\}\d
s
\end{split}
\end{equation*}
for some constant $C_1>0$.
Consequently, we derive \eqref{EXY} by \eqref{FMT}, \eqref{ets} and Gr\"{o}nwall's inequality.
Finally, note that
\begin{equation*}
\begin{split}
\W_1(\hat\mu^N _s,\mu_s)
&\le
\mathbb{W}_1(\hat\mu^N_s,\tt\mu^N_s) +\mathbb{W}_1(\tt\mu^N_s,\mu_s)\le
\frac{1}{N}\sum_{i=1}^N|X^{i,N}(s)-X^i(s)| +\mathbb{W}_1(\tt\mu^N_s,\mu_s),
\end{split}
\end{equation*}
which together with \eqref{EXY} and \eqref{w2} yields \eqref{WAR}.

(2) Rewrite \eqref{eq4} as
$$\d X^{i,N}(t)=b(t,X^{i,N}(t), \mu_t)\d t+B(t,X^{i,N}_t, \mu_t)\d t+\sigma(t,X^{i,N}(t))\d \tilde{W}^i(t), \ \ 1\leq i\leq k$$
with
$$\d \tilde{W}^i(t)=\tilde{\Gamma}^i(t)\d t+\d W^i(t),\ \ 1\leq i\leq k$$
and $$\tilde{\Gamma}^i(t)=\sigma(t,X^{i,N}(t))^{-1}[b(t,X^{i,N}(t), \hat\mu_t^N)-b(t,X^{i,N}(t), \mu_t)+B(t,X^{i,N}_t, \hat\mu_t^N)-B(t,X^{i,N}_t, \mu_t)].$$
It follows from \eqref{sig} and $\sigma^2\geq \delta$ that there exists a constant $C>0$ such that
\begin{align}\label{gam}|\tilde{\Gamma}^i(t)|\leq C(\W_{1}(\hat\mu_t^N,\mu_t)\wedge 1),\ \ t\in[0,T],1\leq i\leq k.\end{align}
Let $$R^k_t=\exp{\left\{-\sum_{i=1}^k\int_0^t\<\tilde{\Gamma}^i(s),\d W^i(s)\>-\frac{1}{2}\sum_{i=1}^k\int_0^t|\tilde{\Gamma}^i(s)|^2\d s \right\}},\ \ t\in[0,T].$$
\eqref{gam} and Girsanov's theorem imply that $\{R^k_t\}_{t\in[0,T]}$ is a martingale and $((\tilde{W}^i(t))_{1\leq i\leq k})_{t\in[0,T]}$ is a $k$-dimensional Brownian motion under $\Q_T^k=R^k_T \P$ and
\begin{align}\label{QQS}\L_{(X^{1,N}(t),X^{2,N}(t),\cdots,X^{k,N}(t))}|\Q^k_T=\mu_t^{\otimes k},\ \ t\in[0,T].
\end{align}
This implies that
\begin{align*}\mu_t^{\otimes k}(f)&=\E [R^k_T f(X^{1,N}(t),X^{2,N}(t),\cdots,X^{k,N}(t))]\\
&=\E [R^k_t f(X^{1,N}(t),X^{2,N}(t),\cdots,X^{k,N}(t))],\ \ f\in\scr B_b(\R^{ k}), t\in[0,T].
\end{align*}
So,  there exists  a constant $C>0$ such that
\begin{align*}&\Ent(\mu_t^{\otimes k}|\L_{(X^{1,N}(t),X^{2,N}(t),\cdots,X^{k,N}(t))})\\
&= \E(R^k_t\log R^k_t)= \frac{1}{2}\sum_{i=1}^k\int_0^t\E^{\Q_T^k}|\tilde{\Gamma}^i(s)|^2\d s\leq  C^2k\int_0^t\E^{\Q_T^k}(\W_{1}(\hat\mu_s^N,\mu_s)\wedge 1)^2\d s,\ \ t\in[0,T].\end{align*}
This together with Pinsker's inequality \eqref{ETX} yields
\begin{align*}\nonumber&\|\mu_t^{\otimes k}-\L_{(X^{1,N}(t),X^{2,N}(t),\cdots,X^{k,N}(t))}\|_{var}^2\\
&\leq 2\Ent(\mu_t^{\otimes k}|\L_{(X^{1,N}(t),X^{2,N}(t),\cdots,X^{k,N}(t))})\\
\nonumber&\leq 2C^2k\int_0^t\E^{\Q_T^k}(\W_{1}(\hat\mu_s^N,\mu_s))\d s.
\end{align*}
The proof is finished by \eqref{QQS} and \eqref{w2}.
\end{proof}
\begin{rem}\label{POC} For quantitative propagation of chaos, one can refer to \cite{HRZ} and references therein for the convolution type distribution dependent SDEs. Since we only assume that the drift is Lipschitz continuous under $L^1$-Wasserstein distance and the estimate in \cite[Theorem 1]{FG} for the convergence rate of empirical distribution of i.i.d. random variables plays crucial role, the order of the quantitative propagation of chaos may be not optimal.
\end{rem}
%
%
%
%

\beg{thebibliography}{99}


\bibitem{BMP} M. Bauer, T. M-Brandis, F. Proske, \emph{Strong Solutions of Mean-Field Stochastic Differential Equations with irregular drift,} Electron. J. Probab. 23(2018), 1-35.



\bibitem{BO} R. J. Berman, M. \"{O}nnheim, \emph{Propagation of Chaos for a Class of First Order Models with Singular Mean Field Interactions,} SIAM J. Math. Anal. 51(2019), 159-196.

\bibitem{CD} R. Carmona, F. Delarue,  \emph{Probabilistic theory of mean
field games with applications. I.} Mean field FBSDEs, control, and
games. Probability Theory and Stochastic Modelling, 83. Springer,
Cham, 2018.
\bibitem{Ch} P.-E. Chaudru de Raynal, \emph{Strong well-posedness of McKean-Vlasov stochastic differential equation with H\"older drift, } Stochastic Process. Appl.  130(2020),     79-107.

\bibitem{CF} P.-E. Chaudru de Raynal, N. Frikha, \emph{Well-posedness for some non-linear diffusion processes and related pde on the Wasserstein space,} J. Math. Pures Appl. 159(2022), 1-167.





\bibitem{DEG} G. dos Reis, S. Engelhardt, G. Smith, \emph{Simulation of McKean-Vlasov SDEs with super linear
growth,} IMA J. Numer. Anal. 42(2022), 874-922.

\bibitem{FG} N. Fournier, A. Guillin, \emph{On the rate of convergence in Wasserstein distance of the empirical measure,} Probab. Theory Related Fields 162(2015), 707-738.
\bibitem{FH} N. Fournier, M. Hauray, \emph{Propagation of Chaos for the Landau equation with moderately soft potentials,} Ann. Probab. 44(2016), 3581-3660.
\bibitem{GLL} O. Gu\'{e}ant O, J.-M. Lasry, P.-L. Lions, Mean Field Games and Applications, \emph{Paris-Princeton Lectures on Mathematical Finance 2010. Lecture Notes in Math.} 2003, Springer, Berlin, 205-266.
\bibitem{GLWZ} A. Guillin, W. Liu,  L. Wu, C. Zhang, \emph{The kinetic Fokker-Planck equation with mean field interaction,} J. Math. Pures Appl. 150(2021),1-23.

\bibitem{HRZ} Z. Hao, M. R\"{o}ckner, X. Zhang, \emph{Strong convergence of propagation of chaos for McKean-Vlasov SDEs with singular interactions,} arXiv:2204.07952.
\bibitem{HW} X. Huang, F.-Y.  Wang,  \emph{Distribution dependent SDEs with singular coefficients, } Stochastic Process. Appl. 129(2019), 4747-4770.

\bibitem{HW20} X. Huang, F.-Y.  Wang,  \emph{McKean-Vlasov SDEs with drifts discontinuous under Wasserstein distance,} Discrete Contin. Dyn. Syst. 41(2021), 1667-1679.

\bibitem{HX} X. Huang,  \emph{Path-distribution dependent SDEs with singular coefficients,}  Electron. J. Probab. 26(2021), 1-21.
\bibitem{JR} B. Jourdain, J. Reygner, \emph{Propagation of chaos for rank-based interacting diffusions and long time behaviour of a scalar quasilinear parabolic equation,}  Stoch. PDE: Anal. Comp. 1(2013), 455-506.

\bibitem{IW} N. Ikeda, S. Watanabe, \emph{Stochastic differential equations and diffusion processes}, North-Holland Mathematical Library, 24.
 North-Holland Publishing Co., Amsterdam-New York; Kodansha, Ltd., Tokyo, 1981.

\bibitem{L} D. Lacker, \emph{On a strong form of propagation of chaos for McKean-Vlasov equations,} Electron. Commun. Probab. 23(2018), 1-11.


\bibitem{Mc} H. P. McKean, \emph{A class of Markov processes associated with nonlinear parabolic equations,} Proc Natl Acad Sci U S A, 56(1966), 1907-1911.

\bibitem{MV} Yu. S. Mishura, A. Yu. Veretennikov, \emph{Existence and uniqueness theorems for solutions of McKean-Vlasov stochastic equations,} Theor. Probability and Math. Statist. 103(2020), 59-101.

\bibitem{Pin} M. S. Pinsker, \emph{Information and Information Stability of Random Variables and Processes,} Holden-Day, San Francisco, 1964.

\bibitem{RS} M. von Renesse, M. Scheutzow, \emph{Existence and uniqueness of solutions of stochastic functional differential equations,} Random Oper. Stoch. Equ. 18(2010), 267-284.

\bibitem{RW}  P. Ren, F.-Y. Wang, \emph{Exponential convergence in entropy and Wasserstein for McKean-Vlasov SDEs,}  Nonlinear Anal. 206(2021), 112259.

\bibitem{RZ} M. R\"{o}ckner, X. Zhang, \emph{Well-posedness of distribution dependent SDEs with singular drifts,} Bernoulli 27(2021), 1131-1158.

\bibitem{SZ} A.-S. Sznitman, Topics in propagation of chaos, \'{E}cole d' \'{E}t\'{e} de Prob. de Saint-Flour XIX-1989, 165-251, Lecture Notes in Math., 1464, Springer, Berlin, 1991.

\bibitem{Wangb} F.-Y. Wang, \emph{Distribution-dependent SDEs for Landau type equations,} Stochastic Process. Appl. 128(2018), 595-621.

\bibitem{W21} F.-Y. Wang, \emph{Exponential Ergodicity for Singular Reflecting McKean-Vlasov SDEs,} arXiv:2108.03908.

\bibitem{Zhang}  X. Zhang, \emph{A discretized version of Krylov's estimate and its
applications,} Electron. J.
Probab. (24)2019, 1--17.

\end{thebibliography}

\end{document}